\documentclass[12pt]{article}

\usepackage[T2A]{fontenc}
\usepackage[margin=1in]{geometry} 
\usepackage{amsmath,amsthm,amssymb}
\usepackage[utf8]{inputenc}
\usepackage[english]{babel}
\usepackage{graphicx}
\usepackage{tablefootnote}
\usepackage{float}
\usepackage[ruled, vlined, linesnumbered, longend]{algorithm2e}

\usepackage{epstopdf}

\usepackage{amsfonts, amsmath, amsthm}
\usepackage{amssymb}
\usepackage{url}
\usepackage{hyperref}

\usepackage{indentfirst}

\usepackage{mathtools}
\usepackage{makecell}
\usepackage{tikz}
\usepackage{dsfont}
\usepackage{secdot}
\usepackage{graphicx}

\usepackage{float}
\usepackage{comment}

\usepackage{multicol}
\usepackage{lipsum}
\usepackage{mwe}

\usepackage{orcidlink}

\usepackage{caption}
\usepackage{subcaption}

\sectiondot{section}
\sectiondot{subsection}
\theoremstyle{definition}

\newtheorem{theorem}{Theorem}
\newtheorem{lemma}{Lemma}

\newtheorem{definition}{Definition}

\newtheorem{statement}{Proposition}

\makeatletter

\allowdisplaybreaks

\begin{document}

\begin{center}

\textbf{\large{Reducing the upper bound for the Borsuk number in $\mathbb{R}^4$ to 8}}
\\[1em]
{ 
Alexander Tolmachev\, \orcidlink{0009-0006-7711-3005}$^{1,2,3,*}$, 
Vsevolod Voronov\, \orcidlink{0000-0003-3835-6144}$^{2, 1,**}$
}
\\[1em]
\footnotesize{
\textit{$^1$Moscow Institute of Physics and Technology, Moscow, Russia} \\
\textit{$^2$Caucasus Mathematical Center of Adyghe State University, Maikop, Russia} \\
\textit{$^3$Skolkovo Institute of Science and Technology, Moscow, Russia}

$^{*}$tolmachev.ad@phystech.edu $^{**}$v-vor@yandex.ru
}

\begin{abstract}
The Borsuk number $b(n)$ of $n$-dimensional Euclidean space $\mathbb{R}^n$ is the smallest integer such that any set $F \subset \mathbb{R}^n$ of unit diameter can be partitioned into $b(n)$ subsets of strictly smaller diameter. For $n=4$, the best known upper bound $b(4) \leq 9$ follows from a construction by M. Lassak (1982). In the present paper, we construct partitions of several variants of the truncated Lassak cover into 8 parts of diameter less than 1, thereby showing that $b(4) \leq 8$.

\end{abstract}

\end{center}

\section{Introduction} 

In 1933, K. Borsuk~\cite{borsuk} posed the following question: \textit{Can every bounded set with positive diameter in $\mathbb{R}^n$ be partitioned into $n + 1$ subsets of smaller diameter?} This problem has become one of the most famous in combinatorial geometry~\cite{borsuk1, borsuk2, borsuk3}. It is usually called the Borsuk problem or the Borsuk conjecture. The first counterexample for $n=2015$ was found by J.~Kahn and G.~Kalai in~\cite{KahnKalai1993}. Later counterexamples were found in smaller dimensions (see \cite{Raigorodskii2007} for a detailed review). At the moment, finite sets that cannot be partitioned into $n+1$ parts of smaller diameter have been constructed for $n \geq 64$~\cite{bondarenko2014borsuk,jenrich201464}. Since it is known that the answer to the original Borsuk question is negative in general, it is of interest to determine the smallest number of parts, depending on the dimension, for which such a partition is possible.

\begin{definition} Let $b(n)$ be the minimum number of parts such
that any bounded subset $A \subset \mathbb{R}^n$ can be divided into $b(n)$ parts of smaller diameter. The quantity $b(n)$ is called the Borsuk number.
\end{definition}

The asymptotic lower and upper bounds~\cite{raigorodskii1999bound,bourgain1991convering} have the form

\[
    (1.2255...+o(1))^{\sqrt{n}} < b(n) < (1.224...+o(1))^{n}.
\]
However, for these estimates to work well, the dimension must be sufficiently large. 

The lower bound $b(n) \ge n + 1$ follows from the fact that the regular $n$-dimensional simplex cannot be partitioned into $n$ parts of smaller diameter. 
Observe that a unit segment can be divided into two segments of smaller length. Thus, $b(1)=2$. K.~Borsuk observed that $b(2)=3$. The exact value of $b(3) = 4$ was by J. Perkal~\cite{perkal1947} in 1947 and independently, by H. G. Eggleston~\cite{eggleston1955} in 1955. Later  B.~Grunbaum~\cite{grunbaum1957simple} and A.~Heppes~\cite{Heppes1957} proposed  simplified proofs. The estimate for the maximum diameter of the parts has been improved from $0.9887\ldots$ (B.~Grünbaum, 1957~\cite{grunbaum1957simple}) to $0.98$ (V.V.~Makeev and L.~Evdokimov, 1997~\cite{makeev2000}). In our recent work, this quantitative estimate was further improved.

\begin{theorem}[~\cite{DAM_2022_Tolmachev}]
        Any set of unit diameter in $\mathbb{R}^3$ can be partitioned into 4 parts of diameter less than $0.966$.
\end{theorem}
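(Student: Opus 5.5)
The plan follows the classical Grünbaum–Heppes scheme, made quantitative. First we reduce to sets of constant width: every set of unit diameter in $\mathbb{R}^3$ is contained in a body of constant width $1$, so it suffices to partition bodies of constant width $1$. Any such body lies in a regular octahedron $O$ whose opposite faces are at distance $1$, and this octahedron can be rotated freely (a continuity / Kakutani-type choice of orientation is available). The set is therefore covered by a suitably truncated octahedron, and we partition this fixed polytope.

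\textbf{Step 1 (truncation).} Since the body has width $1$ in every direction and lies inside $O$, it misses neighbourhoods of the vertices of $O$. Using the rotational freedom, we arrange that the three pairs of opposite vertices are cut at distances giving enough room. Concretely, we remove from $O$ caps near the vertices by planes perpendicular to the main diagonals. The point is that the body cannot reach two opposite vertex regions simultaneously, since the opposite vertices are at distance $\sqrt2\cdot\sqrt{3}/\ldots >1$. Grünbaum's argument shows that at least three vertices of $O$ can be cut off by planes at distance $1$ from the opposite faces' vertices, while the remaining cuts are weaker. We keep all parameters symbolic: the octahedron size and the depth of the cuts.

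\textbf{Step 2 (partition).} Next we partition the truncated octahedron $T$ into $4$ pieces. Following Heppes–Grünbaum, these are cones from a suitably chosen interior point (not necessarily the centre) over groups of faces: the $8$ faces are grouped in pairs of adjacent faces, together with the adjacent truncation facets. Each piece is a polytope, so its diameter is the maximum distance among its vertices. We then optimize the position of the cut planes and the apex to minimise the largest of these pairwise distances. All four pieces have to be checked, and a symmetric choice reduces this to a small number of vertex-pair distances.

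\textbf{Step 3 (verification).} We compute every vertex coordinate exactly as an algebraic expression in the chosen parameters, and verify (with a computer, using interval or exact arithmetic) that every pairwise distance within each piece is at most $0.966$. Because the truncation depends on how the body sits in $O$, the main obstacle is Step 1 combined with Step 3. The cutting guaranteed by the constant-width argument must be deep enough for the vertices of the pieces near the truncated corners to be close. If a single configuration fails, I would first split into cases according to which vertices admit deep cuts, using the rotational freedom to force, say, six cuts. I would then construct a separately optimized partition for each case and verify each numerically.
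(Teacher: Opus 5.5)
Your Step~2 fails as stated. Place $O=\{|x_1|+|x_2|+|x_3|\le\tfrac{\sqrt3}{2}\}$, with Gr\"unbaum's cuts $x_i\le\tfrac12$, so the uncut vertices are $A_i=-\tfrac{\sqrt3}{2}e_i$ ($i=1,2,3$).
\begin{itemize}
\item The $A_i$ are pairwise $\sqrt6/2\approx1.225$ apart, and the face of $O$ with outer normal $-(1,1,1)/\sqrt3$ is the whole triangle $A_1A_2A_3$, untouched by the cuts.
\item Each face through two of the $A_i$ contains a segment $A_iA_j$.
\item Each face through one $A_i$ contains two points at distance $\sqrt{2-\sqrt3/2}\approx1.065$, e.g.\ $A_3$ and $(\tfrac12,\tfrac{\sqrt3-1}{2},0)$.
\end{itemize}
So seven of the eight octahedral faces have diameter greater than $1$. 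With the faces grouped in pairs, every piece contains at least one of them, so every piece has diameter greater than $1$, whatever the apex. In fact $A_1,A_2,A_3$ and the centre $(1,1,1)/(2\sqrt3)$ of the hexagonal face are pairwise at distance $\sqrt6/2$. Hence in any admissible partition, Gr\"unbaum's included, they lie in four different pieces, and the triangle $A_1A_2A_3$ is shared by three of them. The combinatorics must therefore be fixed by points on $\partial T$, with piece boundaries running through the interiors of faces (as in the apex-and-rays construction of Section~4), not by whole faces.

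Second, even with a valid scheme, nothing in your proposal shows that the single three-cut octahedron admits four pieces of diameter $<0.966$. Gr\"unbaum's partition of exactly this cover gives only $0.9887$, and the later $0.98$ used a different cover (Makeev's truncated rhombic dodecahedron). The paper does not reprove this theorem; it cites it. The method it refers to consists of a concrete finite universal covering system together with explicit partitions whose diameters are certified, and ``optimize, then verify'' supplies neither.

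Your fallback is also not sound, for two reasons.
\begin{itemize}
\item On each diagonal, $h_K(e)+h_K(-e)=1$ guarantees a depth-$\tfrac12$ cut at only one of the two opposite vertices. That, not the distance between opposite vertices, is Gr\"unbaum's argument.
\item The circumscribed regular octahedron is not freely rotatable. If $n_1,\dots,n_4$ are alternate unit face normals, the condition $\sum_i h_K(n_i)=2$ (needed for the eight support planes to form a regular octahedron) already constrains the orientation. The positions of the remaining slabs are then dictated by $K$, so you cannot ``force six cuts'' by rotating.
\end{itemize}
Extra truncations come instead from Proposition~\ref{st:cutting}: each added pair of fixed parallel planes at unit distance splits the cover into two fixed cases. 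This yields a finite UCS, reduced by symmetry, and each member must then be partitioned and its pieces' diameters certified.
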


Currently it is known that the Borsuk conjecture holds for $n \le 3$ and does not hold for $n \ge 64$. For $n \ge 4$, the best constructive upper bound for the Borsuk number in small dimensions is $b(n) \le 2^{n - 1} + 1$~\cite{lassak1982}, resulting in $b(4) \le 9$. The present paper proposes a computational approach to improving upper bound for Borsuk number in $\mathbb{R}^4$: 

\begin{theorem}
    Any set of unit diameter in $\mathbb{R}^4$ can be partitioned into 8 parts of diameter less than $0.9999$.
    \label{main_theorem}
\end{theorem}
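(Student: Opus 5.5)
The plan is to reduce the problem to a single explicit body and then partition that body by computer-verified geometry.

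\textbf{Reduction to a fixed cover.} First, every set $F\subset\mathbb{R}^4$ of unit diameter lies in a set of constant width $1$, by completion. Hence it suffices to handle sets of constant width. Following Lassak, I would place such a set inside a unit-width cube-like cover: a set of constant width $1$ lies in a unit cube $[0,1]^4$ after a suitable rotation and translation, since any set of unit width fits between parallel supporting hyperplanes at distance $1$. Lassak's refinement uses more than the cube. A constant-width body $K$ lies inside the intersection of the cube with unit balls centred at points of $K$, and in particular $K\subset B(x,1)$ for every $x\in K$. Choosing the orientation via a topological (Borsuk--Ulam / Knaster type) argument, one can arrange that $K$ touches prescribed facets at prescribed points. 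This lets us cut off neighbourhoods of the cube's vertices by balls. The result is a fixed \emph{truncated Lassak cover} $L$, or a finite family of variants $L_1,\dots,L_m$ depending on which combinatorial case of the contact configuration occurs, with $K\subset L_i$ for some $i$.

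\textbf{Partitioning each variant.} For each variant $L_i$ I would construct an explicit partition into $8$ pieces $P_1,\dots,P_8$ with $\operatorname{diam}(P_j\cap L_i)\le 0.9999$. The natural starting point is Lassak's $2^{3}+1=9$ pieces: the cube is cut by three coordinate-like hyperplanes into $8$ cells, plus a central piece. The aim is to absorb the central region into the eight cells by tilting or bending the cutting hyperplanes. This is possible only because the truncation removes the far corners, which would otherwise realize diameter $\ge1$ inside a cell. I would parametrize the pieces as Voronoi-type cells, with centres $c_1,\dots,c_8$ and possibly weights, intersected with $L_i$. Their diameters would then be optimized numerically, minimizing the maximum diameter over parameters, separately for each variant.

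\textbf{Certification.} Each piece is an intersection of half-spaces and balls. Its diameter is therefore attained at pairs of points on its boundary, and can be bounded rigorously in one of two ways. One is to cover each piece by finitely many small boxes or simplices and use interval arithmetic: $\operatorname{diam}(P)\le\max_{Q,Q'}\operatorname{diam}(Q\cup Q')$ over pairs of covering cells. The other is to enclose $P$ in a polytope, whose diameter is the maximum distance between its vertices. I would adaptively refine near pairs whose bound exceeds $0.9999$. Finally, the case analysis ensuring every constant-width body falls into some $L_i$ must be checked to be exhaustive.

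The main obstacle is the second step: finding, for every variant of the truncated cover, a partition into only $8$ parts whose diameters fall strictly below $1$. The margin is tiny ($10^{-4}$), so both the geometric truncation must be as strong as possible and the search must be fine. I would first try symmetric partitions, exploiting the symmetry group of the cover, to reduce the number of parameters, and only break symmetry where the optimizer stalls.
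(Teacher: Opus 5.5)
Your overall architecture matches the paper's: a finite universal covering system of truncated covers, an $8$-part partition of each member found numerically, and a rigorous diameter bound through an enclosing polytope. However, the reduction step as you describe it does not produce such a system. Lassak's cover is not a cube cut by balls. It is $L_{(4)}=B(c_0,r)\cap B(c_1,1)$ with $r=\sqrt{2/5}$, $\|c_0-c_1\|=r$ and $c_1\in A$.

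In your picture the balls $B(x,1)$ are centred at contact points of $K$ with the facets, and these depend on $K$. They cut out a fixed body only if those points can be prescribed, and you give no Borsuk--Ulam/Knaster argument for that, nor does one look available. For constant width $1$ the contact points on opposite facets satisfy $x(-u)=x(u)-u$. Pinning them inside the facets is therefore $12$ scalar conditions against the $6$ parameters of $SO(4)$, so one cannot expect it, and you do not say which weaker statement would suffice. Hence you have no finite family $L_1,\dots,L_m$, and the exhaustiveness check you postpone is precisely the missing part.

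The paper's idea is to truncate only in a way compatible with the symmetry of $L_{(4)}$. Rotations about the axis $c_0c_1$ preserve $L_{(4)}$, so one works with the projection $A'$ of $A$ to the hyperplane $p_0$ orthogonal to the axis. An intermediate value argument for $f$ on a circle, using $f(v)+f(-v)\le 1$, gives orthogonal $u_1,u_2\in p_0$ with $A'\subset H_1\cap H_2$, that is, inside unit slabs centred on the axis. Complete $u_1,u_2$ to the six face normals of a rhombic dodecahedron. A set of width at most $1$ in direction $u_i$ lies in $H_i^+$ or in $H_i^-$ (Gr\"unbaum's cutting), which gives $16$ sign patterns, and symmetry reduces these to $U_1,U_2,U_3,U_4'$. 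No contact points have to be prescribed.

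Your second step is only an intention to optimize, and for this theorem that is where the substance lies. Whether $8$ pieces of diameter below $1$ exist depends on how strongly the cover is truncated. Even for the paper's system the certified maximum is $0.99987$ (for $L_{(4)}\cap U_3$), with $U_4$ additionally halved by $H^{\pm}(w)$.

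The paper gets explicit certificates from a concrete scheme:
\begin{itemize}
\item an apex $w\in\operatorname{int}L^H$ with sixteen rays in rotated directions of the vertices of $\{-1,1\}^4$;
\item pieces $L^H\cap K_i$, where $K_i$ are the cones over the eight facets of the $4$-cube;
\item a sampled lower bound driving gradient search;
\item an upper bound from a circumscribed polytope built from tangent hyperplanes at grid points ($m=17$).
\end{itemize}
Your Voronoi-type cells with box or polytope certification could serve as an alternative parametrization. But without a valid covering family and actual partitions below $0.9999$, the proposal is a research plan rather than a proof.
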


Consequently, $b(4) \leq 8$. 

Another algorithm for the numerical search for partitions and the search for counterexamples was proposed in~\cite{zong2}. Quantitative estimates in the Borsuk problem have also been studied beyond $\mathbb{R}^n$, in finite-dimensional Banach spaces~\cite{ZhangMengWu2023, QiZhangLyuWu2025}.

Using the techniques employed in these works, one can compute bounds $p_0$, $p_1$ with $1 < p_0 < 2 < p_1$ such that the statement of Theorem~\ref{main_theorem} remains valid for $\mathbb{R}^4$ equipped with the metric defined by the $p$-norm.

\section{Universal covers and covering systems}

The computational approach to the estimates of the Borsuk number~\cite{kupavskii_d_3_5,Filimonov,DM_2021_Tolmachev, DAM_2022_Tolmachev} is based on the following definitions:

\begin{definition} A set $C$ is called a universal covering set or a \textit{universal cover} if every set $F \subset \mathbb{R}^n$ of unit diameter can be  covered by an isometric copy of $C$. 
\end{definition}

\begin{definition} A collection of sets $\mathcal{S} = \{C_{\alpha} \}_{\alpha \in I}$ is called a \textit{universal covering system (UCS)} if every set $F  \subset \mathbb{R}^n$ of unit diameter can be  covered by an isometric copy of some set $C \in \mathcal{S}$. Here $I$ is a (possibly infinite) set of indices.
\end{definition}

\begin{theorem}[H.~Jung~\cite{Jung1901}, 1901]\label{{thm:jung}} The ball $B \subset \mathbb{R}^n$ with radius $r_n = \sqrt{\frac{n}{2n+2}}$ is a universal cover in $\mathbb{R}^n$.
\end{theorem}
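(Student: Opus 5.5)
The plan is the classical argument: first reduce to finite sets via Helly's theorem, then treat a finite set through its minimal enclosing ball and a barycentric-coordinate computation. Throughout, put $r_n=\sqrt{n/(2n+2)}$, let $F\subset\mathbb{R}^n$ have diameter $1$, and write $B(x,r)$ for the closed ball of radius $r$ centred at $x$.

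\emph{Step 1: reduction to $n+1$ points.} $F$ is covered by a ball of radius $r_n$ centred at $c$ if and only if $c\in\bigcap_{x\in F}B(x,r_n)$. These balls are compact and convex. By Helly's theorem (in its infinite form for compact convex sets), the whole family has a common point as soon as every $n+1$ of its members do. So it suffices to show that every set of at most $n+1$ points with pairwise distances at most $1$ lies in a ball of radius $r_n$.

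\emph{Step 2: the minimal ball of a finite set.} Let $X$ be such a finite set. Let $B(c,R)$ be its smallest enclosing ball, which exists by compactness, and translate so that $c=0$. Let $x_1,\dots,x_m$ be the points of $X$ on the boundary sphere $|x|=R$; then $m\le n+1$. The key geometric claim is that $0\in\operatorname{conv}\{x_1,\dots,x_m\}$. Suppose not. Then a hyperplane strictly separates $0$ from these points. Moving the centre slightly toward them decreases the distance to every $x_i$, and for a small enough move all interior points stay strictly inside. This gives an enclosing ball of radius less than $R$, contradicting minimality. So we may write $0=\sum_i\lambda_i x_i$ with $\lambda_i\ge0$ and $\sum_i\lambda_i=1$.

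\emph{Step 3: the computation.} Fix $i$. Using $|x_j|=R$ for all $j$ and $\sum_j\lambda_j x_j=0$, we get
\[
\sum_{j}\lambda_j|x_i-x_j|^2=\sum_j\lambda_j\bigl(2R^2-2\langle x_i,x_j\rangle\bigr)=2R^2 .
\]
The term $j=i$ vanishes, and every other distance is at most $1$. Hence $2R^2\le\sum_{j\ne i}\lambda_j=1-\lambda_i$. Summing over $i=1,\dots,m$ gives $2mR^2\le m-1$, that is,
\[
R^2\le\frac{m-1}{2m}\le\frac{n}{2n+2},
\]
because $(m-1)/(2m)$ is increasing in $m$ and $m\le n+1$.

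\emph{Main obstacle.} The only delicate point is Step 2, namely showing that the centre of the minimal ball lies in the convex hull of the contact points. The separating-hyperplane perturbation has to be made quantitative enough that the non-contact points remain inside the perturbed ball. This works because there are finitely many of them, each at distance strictly less than $R$ from $c$, so a sufficiently small displacement keeps them inside. The remaining steps are routine.
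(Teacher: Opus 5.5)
Your proof is correct and complete. It is the classical argument for Jung's theorem, with each step properly justified:
\begin{itemize}
\item the Helly reduction to at most $n+1$ points, using the compact convex sets $B(x,r_n)$, $x\in F$;
\item the perturbation showing that the centre of the minimal enclosing ball lies in the convex hull of the contact points (strict separation gives $\langle u,x_i\rangle\ge\delta>0$, hence $|x_i-tu|^2=R^2-2t\langle u,x_i\rangle+t^2<R^2$ for $0<t<2\delta$, while the finitely many non-contact points stay strictly inside for small $t$);
\item the identity $\sum_j\lambda_j|x_i-x_j|^2=2R^2$, summed over the $m\le |X|\le n+1$ contact points.
\end{itemize}

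The paper gives no proof of this statement. It cites Jung (1901) and uses the Jung ball only as the ingredient $B_0$ of Lassak's cover, so there is no in-paper argument to compare with. Your route is the standard textbook one, and it also yields the slightly sharper bound $R^2\le (m-1)/(2m)$ in terms of the number $m$ of contact points.
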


 This ball is called the \textit{Jung ball}. In order to prove the constructive upper bound for the Borsuk number, M. Lassak  considered the following universal cover:

\begin{theorem}[M.~Lassak~\cite{lassak1982}, 1982]\label{thm:lassak}
    Let $c_0, c_1 \in \mathbb{R}^n$ be arbitrary points at a Euclidean distance of $r = \sqrt{\frac{n}{2n+2}}$. The set $L_{(n)} = B_0 \cap B_1 \subset\mathbb{R}^n$ is a universal cover in $\mathbb{R}^n$, where 
    $$B_0 = \{x \in \mathbb{R}^n: \|x - c_0\|_2 \le r \}, \quad B_1 = \{x \in \mathbb{R}^n: \|x - c_1\|_2 \le 1 \}.$$
\end{theorem}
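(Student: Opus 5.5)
We prove Lassak's theorem by combining Jung's theorem with the fact that every set of unit diameter lies in a set of constant width 1. Let $F \subset \mathbb{R}^n$ have unit diameter. By Theorem~\ref{{thm:jung}}, $F$ is contained in a ball of radius $r=\sqrt{\frac{n}{2n+2}}$. We want a point $p \in F$ (or in the closure $\overline F$; passing to the closure does not change the diameter) such that the Jung ball can be chosen with its center $c$ at distance \emph{exactly} $r$ from $p$. Since $\operatorname{diam} F = 1$, every point of $F$ is within distance $1$ of $p$, so $F \subset B(p,1)$ as well. Then the isometry sending $c \mapsto c_0$ and $p \mapsto c_1$ maps $F$ into $B_0 \cap B_1 = L_{(n)}$. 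This isometry exists because $\|c-p\| = r = \|c_0-c_1\|$.

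The main step is therefore to produce such a pair $(c,p)$. First, replace $F$ by a completion $W \supseteq F$ of constant width $1$. Every set of diameter $1$ is contained in such a body; this is a standard fact (Pál/Scott), and we would cite it or sketch it via Zorn's lemma plus the characterization of complete sets. It suffices to cover $W$. Let $c$ be the circumcenter of $W$ and let $R \le r$ be its circumradius. Take $p$ diametrically opposite on the far side: choose a point $q \in W$ with $\|q-c\| = R$. Because $W$ has constant width $1$, there is a point $p \in W$ with $\|p - q\| = 1$ on the normal line through $q$; indeed $q$ lies on a supporting hyperplane of $W$, and the opposite supporting hyperplane at distance $1$ touches $W$ at such a $p$. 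The key inequality to check is $\|p-c\| \ge 1 - R \ge 1 - r$. We also have $\|p-c\| \le R \le r$. In dimension $n$, $r<1/\sqrt2$, so $1 - r > 0$, though $1-r$ may be less than $r$.

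This gives $\|p - c\| \in [1-r, r]$ rather than exactly $r$, so we need to adjust. Slide the center: consider $c_t = c + t(c-p)/\|c-p\|$ for $t \ge 0$. The ball $B(c_t, r)$ still contains $W \cap B(p,1)$ as long as $r$ is not too small. Geometrically, $B(c,R)\cap B(p,1)$ is contained in $B(c_t,r)\cap B(p,1)$ whenever $\|c_t-p\|\le r$. We would verify this by a lens computation. Both are rotationally symmetric about the line $pc$, so it reduces to a planar check: the circle $\partial B(p,1)$ meets $\partial B(c,R)$ at points which must lie inside $B(c_t,r)$, and this holds since moving the center away from $p$ while enlarging the radius to $r$ keeps the sphere $\{x:\|x-p\|=1\}\cap B(c,R)$ covered. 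Choosing $t$ with $\|c_t - p\| = r$ then finishes the proof.

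I expect this monotonicity check to be the main obstacle. Concretely, a point $x$ with $\|x-p\|\le1$ and $\|x-c\|\le R$ must satisfy $\|x-c_t\|\le r$. Writing $d=\|c-p\|$, putting $p$ at the origin and projecting onto the axis, this is a quadratic inequality in the axial coordinate. Its extremal case is the intersection circle of the two spheres, where it reduces to the inequality $d\le r$ together with $R\le r$. An alternative, if this fails, is to choose $c$ not as the circumcenter but by a continuity argument. The distance from $p$ to the center of a minimal enclosing ball of radius exactly $r$ (among balls of radius $r$ containing $W$) varies continuously over a convex set of admissible centers. We would show that this set contains points at distance $\le r$ and also points at distance $\ge r$ from some $p\in W$, and then apply the intermediate value theorem.
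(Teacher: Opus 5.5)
Your reduction in the first paragraph is correct: it suffices to find a ball of radius $r$ containing $F$ whose boundary sphere passes through a point $p$ of $\overline F$ (or of a diameter-one superset). The gap is in how you produce $p$, and the lens containment you rely on is false. Put $e=(c-p)/d$ and $t=r-d$. The point $x=c-Re$ lies in $B(c,R)$ and satisfies $\|x-p\|=R-d\le 1$, yet $\|x-c_t\|=R+t=r+(R-d)>r$ whenever $d<R$. So the farthest point of the lens from $c_t$ is not on the intersection sphere; it is an interior point of $B(p,1)$. In fact the containment holds only for $t\le r-R$.

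Moreover, $d<R$ is exactly your situation. Take the normal $(q-c)/R$ at $q$; this is forced when $\partial W$ is smooth at $q$, because the tangent hyperplane of the circumsphere at $q$ supports $W$. Then the opposite point is $p=c-(1-R)(q-c)/R$, so $d=1-R<R$ unless $W$ is a ball.

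Extra information about $W$ cannot rescue this choice of $p$. For the Reuleaux triangle ($n=2$, $R=r=1/\sqrt3$), the circumdisk is the only disk of radius $r$ containing the three vertices. Your $p$ is the midpoint of the arc opposite the vertex $q$, at distance $1-1/\sqrt3<r$ from that center. The same example defeats your intermediate-value alternative, since for this $p$ the set of admissible centers is a single point.

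The missing idea is to take $p=q$ itself, which makes the argument immediate. Since $\overline F$ is compact, its circumball $B(c,R)$, with $R\le r$, touches $\overline F$ at some point $q$. Put $c'=c+(r-R)(c-q)/R$. Then $\|x-c'\|\le\|x-c\|+\|c-c'\|\le R+(r-R)=r$ for every $x\in B(c,R)$, and $\|q-c'\|=R+(r-R)=r$. Since $q\in\overline F$ and $\operatorname{diam}F=1$, also $F\subset B(q,1)$. The isometry sending $c'\mapsto c_0$ and $q\mapsto c_1$ then places $F$ inside $L_{(n)}$. No completion to a body of constant width is needed. The paper does not reprove the theorem (it cites Lassak), but this is the construction it implicitly uses in Proposition~\ref{st:sym_center}, where $c_1$ is taken to be a point of the covered set.
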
 

Now let us introduce the well-known method that can be used to construct a UCS from a single universal cover.

\begin{statement}[\cite{grunbaum1957simple}]
\label{st:cutting}
Let $\mathcal{S} = \{C_{\alpha} \}_{\alpha \in I}$ be a UCS. Let $h^+, h^-$ be a pair of parallel hyperplanes at a unit distance apart, and $H^+$, $H^-$ be intersecting half-spaces bounded by $h^+$, $h^-$ respectively. Suppose that there exists $C \in \mathcal{S}$ such that $C \setminus H^+ \neq \emptyset$ and $C \setminus H^- \neq \emptyset$.

Then the collection
   \[
        \mathcal{S}' = \mathcal{S} \setminus \{C\} \cup \{C \cap H^+, C \cap H^-\}  
    \]

is a universal covering system.
\end{statement}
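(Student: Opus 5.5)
Fix an arbitrary set $F \subset \mathbb{R}^n$ of unit diameter. Since $\mathcal{S}$ is a UCS, there is some $C_\alpha \in \mathcal{S}$ and an isometry $\varphi$ with $F \subset \varphi(C_\alpha)$. If $C_\alpha \neq C$, then $C_\alpha \in \mathcal{S}'$ and we are done. So the only case to handle is $F \subset \varphi(C)$. In that case we pass to $\varphi^{-1}(F) \subset C$. This set again has unit diameter, and it suffices to show that it lies in $C \cap H^+$ or in $C \cap H^-$. Then $F$ is covered by the isometric copy $\varphi(C\cap H^{\pm})$ of a member of $\mathcal{S}'$.

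The key step is a one-dimensional argument. Let $u$ be the unit normal to $h^\pm$, oriented so that $H^+ = \{x : \langle x,u\rangle \ge a\}$ and $H^- = \{x : \langle x,u\rangle \le a+1\}$. "Intersecting" means the two half-spaces face each other, so $H^+\cap H^-$ is the slab between the hyperplanes, of width $1$. Project $G=\varphi^{-1}(F)$ onto the line spanned by $u$. The image lies in an interval $[s,t]$ with $s=\inf_{x\in G}\langle x,u\rangle$ and $t=\sup_{x\in G}\langle x,u\rangle$. Projection is $1$-Lipschitz and $\operatorname{diam} G = 1$, so $t-s\le 1$.

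We then split into two cases.
\begin{itemize}
\item If $s \ge a$, then $G\subset H^+$.
\item If $s<a$, then $t \le s+1 < a+1$, so $G \subset H^-$.
\end{itemize}
In both cases $G \subset C\cap H^\pm$, as required.

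The hypothesis that $C\setminus H^\pm\neq\emptyset$ plays no role in the covering property. It only ensures that each cut is a genuine reduction, so that $C\cap H^{\pm}$ is a proper subset of $C$. Closedness of the half-spaces matters at the boundary: since $H^\pm$ include $h^\pm$, the non-strict inequalities above suffice. The only point needing care, and the main "obstacle" (a mild one), is fixing the orientation convention so that the two half-spaces overlap in the width-one slab. Once that is fixed, the projection argument is immediate.
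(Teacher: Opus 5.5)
Your proof is correct. You pull back to $\varphi^{-1}(F)\subset C$ and observe that a set of diameter $1$ has width at most $1$ in the direction $u$, so it cannot meet both $\mathbb{R}^n\setminus H^+$ and $\mathbb{R}^n\setminus H^-$; this is the standard argument, and the paper gives no proof of its own, only a citation to Gr\"unbaum (the paper's convention $H_i^+=\{\langle x,u_i\rangle\le \tfrac12\}$ simply uses the opposite orientation from yours, which is immaterial).
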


The process of cutting the covers allows one to make the maximum diameter of $C_\alpha$ arbitrarily close to unity. But at the same time, the number of elements of the covering system grows exponentially in relation to the number of operations applied to each cover. In this paper, we limit ourselves to just a few truncations and a covering system $\mathcal{S}$ consisting of 4 sets.

In the next sections, we create partitions of each set $C_i$, $i=1,2,3,4$, into 8 parts of diameter less than 1. This guarantees that each set $A \subset \mathbb{R}^4$ can be covered by one of the elements of $\mathcal{S}$, so $A$ can be divided into 8 parts of diameter less than 1. The verification procedure for the diameter computation finalizes the proof of Theorem~\ref{main_theorem}.

\section{Truncation of the Lassak cover}

In this section, we show that any body $F \subset \mathbb{R}^4$ with $\operatorname{diam} F = 1$ can be covered by at least one of the four sets obtained by truncating (the projection of) the Lassak cover $L^{(4)}$ using eight faces of a rhombic dodecahedron with a given center $x \in F$ and one additional hyperplane. The truncated rhombic dodecahedron was shown by V.V.~Makeev~\cite{makeev2000} to be a universal cover in $\mathbb{R}^3$. Since we require a cover with a fixed center, we cannot use all 12 faces and 3 truncations as in V.V.~Makeev's construction.

The intersection of the two spheres forming the boundary of the Lassak cover is the boundary of the Jung ball in dimension $n-1$.
In particular, for $n=4$, this intersection is a sphere $S'$ of radius $r_3 = \sqrt{\frac{3}{8}}$. We choose coordinates such that $S' \subset p_0 = \{x \in \mathbb{R}^4 : x_4 = 0\}$, and the centers of the spheres lie on the line $x_1 = x_2 = x_3 = 0$. 

Let $A \subset \mathbb{R}^4$ be a convex set with $\operatorname{diam} A = 1$. Let $A'$ be the projection of $A$ onto the hyperplane $p_0$. Observe that $\operatorname{diam} A' \leq \operatorname{diam} A = 1$.

\begin{statement}\label{st:sym_center}
    A closed convex set $A \subset \mathbb{R}^n$ can be covered by the Lassak cover $L_{(n)}$ in such a way that the symmetry axis of $L_{(n)}$ intersects $A$.
\end{statement}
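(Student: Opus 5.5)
The plan is to rebuild Lassak's cover around $A$, choosing the direction of the axis so that it passes through the point of $A$ nearest to the centre $c_0$ of a Jung ball. Throughout, $A$ is assumed to have $\operatorname{diam} A \le 1$, as in the surrounding text; this is the only situation in which the statement is used. The key observation is that the second ball $B_1$ in Theorem~\ref{thm:lassak} has radius $1 > \sqrt{2}\,r$. Because of this, $B_1$ covers $A$ for a whole range of positions of $c_1$ on the sphere of radius $r$ around $c_0$, and we are free to pick the position we like.

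\emph{Step 1.} By Theorem~\ref{{thm:jung}}, there is a ball $B_0$ of radius $r=\sqrt{\frac{n}{2n+2}}$ with some centre $c_0$ such that $A\subset B_0$.

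\emph{Step 2.} Choose a unit vector $u$ as follows.
If $c_0\in A$, let $u$ be an arbitrary unit vector.
Otherwise, since $A$ is closed and convex, there is a unique nearest point $a\in A$ to $c_0$, and we set $u=(a-c_0)/\|a-c_0\|$.
In both cases put $c_1=c_0+ru$, so that $\|c_1-c_0\|=r$ as Theorem~\ref{thm:lassak} requires.
The line through $c_0$ and $c_1$ is the symmetry axis of $L_{(n)}=B_0\cap B_1$. By construction it contains $c_0\in A$ in the first case and $a\in A$ in the second, so it meets $A$.

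\emph{Step 3.} It remains to show that $A\subset B_1$, that is, $\|y-c_1\|\le 1$ for every $y\in A$. The nearest-point property gives $(y-a)\cdot(a-c_0)\ge 0$ for every $y\in A$. Hence
$$u\cdot(y-c_0)\;\ge\; u\cdot(a-c_0)\;=\;\|a-c_0\|\;\ge\;0,$$
and in the case $c_0\in A$ we only need to note that the desired estimate below holds trivially if we drop this term. Then
$$\|y-c_1\|^2=\|y-c_0\|^2-2r\,u\cdot(y-c_0)+r^2 .$$
In the second case $u\cdot(y-c_0)\ge 0$. Together with $\|y-c_0\|\le r$ this gives
$$\|y-c_1\|^2\le 2r^2=\frac{n}{n+1}<1 .$$

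\emph{Step 4.} The first case needs separate care, because for an arbitrary $u$ the sign of $u\cdot(y-c_0)$ is not controlled. Here I would instead fix the choice of $u$: let $u$ be an outer normal of a supporting hyperplane to $A$ passing through $c_0$, which must then lie on $\partial A$. If $c_0$ is an interior point of $A$, then $A$ must be inverted: use $-u$ and compare with the estimate via the diameter.
This case is the main obstacle I expect. The cleanest fix is to perturb: replace $c_0$ by a nearby centre of a slightly larger ball, or argue directly that $y\in B_0$ and $\|y-c_0\|\le r$ force $\|y-c_1\|\le\sqrt{r^2+2r^2+r^2}$. The last bound is too weak, so perturbation plus compactness is the route I would pursue. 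Concretely:
\begin{itemize}
\item take $c_0'\notin A$ close to $c_0$ and a ball of radius $r+\varepsilon$ about $c_0'$ containing $A$;
\item apply Steps 2 and 3 with $r+\varepsilon$ in place of $r$, which still leaves the margin $2(r+\varepsilon)^2<1$;
\item let $\varepsilon\to 0$, using closedness of $A$ and the fact that the meeting point of the axis with $A$ varies in the compact set $A$.
\end{itemize}
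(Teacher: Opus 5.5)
Your argument for $c_0\notin A$ is correct: the nearest-point inequality gives $u\cdot(y-c_0)\ge 0$ on $A$, hence $\|y-c_1\|^2\le 2r^2<1$, and the axis passes through $a\in A$. The case $c_0\in\partial A$ also works directly, provided $u$ is the \emph{inner} normal of a supporting hyperplane at $c_0$. With the outer normal, as you wrote, the sign of $u\cdot(y-c_0)$ is wrong. For the same reason, your remark in Step 3 that one may ``drop the term'' when $c_0\in A$ is not valid.

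\textbf{The gap is the case $c_0\in\operatorname{int}A$.} Your proposed fix cannot work there. If $c_0$ is interior, there is no $c_0'\notin A$ close to $c_0$, so the perturbation has nothing to start from. Worse, with the Jung ball fixed there may be no admissible $c_1$ at all. For $n\ge 2$, let $A$ be the closed ball of diameter $1$ centred at $c_0$. Then $A\subset B(c_0,r)$ since $r>\frac12$. Yet for every unit $u$, the point $y=c_0-\frac12 u\in A$ satisfies $\|y-c_0-ru\|=r+\frac12>1$. Moreover, every $c_0'\notin A$ lies at distance $>\frac12$ from $c_0$, so any ball about $c_0'$ containing $A$ has radius $>1$, and the margin $2(r+\varepsilon)^2<1$ is lost. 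So no argument that keeps $B_0$ fixed, or only slightly perturbed, can close this case. The Jung ball itself must be re-chosen.

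\textbf{The missing idea} is the one the paper uses, which is built into Lassak's construction: make $c_1$ a point of $A$. Since $A\subset B(c_0,r)$ is compact, pick $a\in A$ maximizing $\rho=\|a-c_0\|$. The case $\rho=0$ is trivial; otherwise shift the centre away from $a$ by setting $c_0'=c_0-(r-\rho)\frac{a-c_0}{\rho}$. Then $B(c_0,\rho)\subset B(c_0',r)$, so $A\subset B(c_0',r)$, and $\|a-c_0'\|=r$. Taking $c_1=a$ gives $A\subset B(c_1,1)$ simply because $\operatorname{diam}A\le 1$, and the axis through $c_0'$ and $c_1$ meets $A$ at $c_1$. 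This handles all cases uniformly, uses neither convexity nor the nearest-point property, and makes your case distinction unnecessary. Your nearest-point argument remains a valid alternative only in the sub-case $c_0\notin A$, where it even gives $A\subset B(c_1,\sqrt2\,r)$.
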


\begin{proof}
    By construction of the Lassak cover (see Theorem~\ref{thm:lassak}), it is assumed that $c_1 \in A$. The symmetry axis of $L_{(n)}$ is the line passing through $c_0$ and $c_1$.
\end{proof}

Define the halfspaces with the boundary at $\frac{1}{2}$ from the origin as 
\[
    H_i^+ = H^+(u_i) = \left\{x \in \mathbb{R}^4 \mid \langle x, u_i \rangle \leq \frac{1}{2} \right\}, \quad H_i^-  = H^-(u_i) = \left\{x \in \mathbb{R}^4 \mid \langle x, u_i \rangle \geq -\frac{1}{2} \right\}.
\]

Let  $H_i = H_i^+ \cap H_i^-$ be the ``layer'' between a pair of parallel planes at unit distance.

To keep the notation to a minimum  in the rest of the section, we consider covering a three-dimensional body by \emph{four-dimensional ``tubes''}  whose projections onto $p_0$ form a UCS in three-dimensional space.

\begin{statement}
    \label{st:brick}
    For a given three-dimensional body $A' \subset p_0$ containing the origin, there exist unit vectors $u_1, u_2 \in p_0$ such that $u_1$ and $u_2$ are orthogonal and $A'$ is contained in the set  

    \[
    H_1 \cap H_2 = \left\{ x \in \mathbb{R}^4 \mid  - \frac{1}{2} \leq  \langle u_1, x \rangle \leq  \frac{1}{2}, \;  - \frac{1}{2} \leq \langle u_2, x \rangle  \leq \frac{1}{2} \right\}. 
    \]
\end{statement}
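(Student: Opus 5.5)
The plan is to pick the two directions by an intermediate value argument applied to the support function of $A'$. No covering results from earlier are needed. Write $h(u) = \sup_{x \in A'} \langle u, x\rangle$ for the support function of $A'$, with $u$ a unit vector in $p_0$. We may take $A'$ compact; otherwise pass to its closure, which changes neither the diameter nor the supremum. Then $h$ is finite and continuous on the unit sphere $S^2 \subset p_0$.

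First, for every unit $u$ the width of $A'$ in direction $u$ is
\[
w(u) = h(u) + h(-u) = \sup_{x,y \in A'} \langle u, x - y\rangle \le \operatorname{diam} A' \le 1 .
\]
So it suffices to find orthogonal $u_1, u_2$ with $h(u_i) = h(-u_i)$: each then equals $w(u_i)/2 \le \tfrac12$, which gives $-\tfrac12 \le \langle u_i, x\rangle \le \tfrac12$ for all $x \in A'$. Since $A' \subset p_0$ and $u_1,u_2 \in p_0$, this is exactly the inclusion $A' \subset H_1 \cap H_2$ in the statement.

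To find such vectors, consider the odd continuous function $f(u) = h(u) - h(-u)$ on $S^2$, which satisfies $f(-u) = -f(u)$. On any great circle $\Gamma \subset S^2$, take a point $v \in \Gamma$. Either $f(v) = 0$, or $f(v)$ and $f(-v)$ have opposite signs. In the second case, continuity along an arc of $\Gamma$ from $v$ to $-v$ gives a zero of $f$ on $\Gamma$. So $f$ vanishes somewhere on every great circle.

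Apply this twice. First take an arbitrary great circle and obtain $u_1$ with $f(u_1) = 0$. Then take the great circle $\{u \in S^2 : \langle u, u_1\rangle = 0\}$ and obtain $u_2 \perp u_1$ with $f(u_2) = 0$. This finishes the argument.

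The only delicate point is continuity of $h$, which holds because $A'$ is bounded: $h$ is Lipschitz with constant $\sup_{x\in A'}\|x\|$. The hypothesis that $A'$ contains the origin is not needed for this statement. Its role is presumably to keep the layers $H_1, H_2$ centred at a point of $A'$, compatible with Proposition~\ref{st:sym_center}.
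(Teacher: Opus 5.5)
Your proof is correct and follows essentially the same route as the paper: an intermediate value argument along a half great circle, applied first to an arbitrary great circle and then to the great circle orthogonal to $u_1$. The only difference is that you apply the intermediate value theorem to the odd function $h(u)-h(-u)$ to find a balanced direction, whereas the paper applies it to its support-type function $f$ to hit the value $\tfrac12$ after a case split. Your version also correctly shows that the hypothesis $0\in A'$ is unnecessary; the paper's $f$, defined via $\alpha>0$, coincides with the support function only because $0\in A'$.
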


\begin{proof}
    Consider an arbitrary hyperplane $p'$ that does not coincide with $p_0$ and passes through $0$. Let $\omega$ be the unit circle centered at $0$ lying in the two-dimensional plane $p' \cap p_0$. Consider the vector $v(t)$ traversing $\omega$ as $t \in [0, 2\pi]$. Let the hyperplane $q(v, \alpha)$ be given by the equation $\langle v, x \rangle = \alpha$. Define  

    \[
        f: \omega \to \mathbb{R}_{>0}, \quad f(v) = \inf \{ \alpha >0 \mid q(v, \alpha) \cap \operatorname{int}  A' = \emptyset \}.
    \]

    The function $f(v)$ is continuous on the circle $\omega$. Moreover,  

    \[
        f(v)+f(-v)\leq \operatorname{diam} A' \leq  1.
    \] 

    Fix $v_0 \in \omega$. If $f(v_0) \leq \frac{1}{2}$ and $f(-v_0) \leq \frac{1}{2}$, then we can set $u_1 = v_0$. Then $A'$ lies between the hyperplanes $\langle u, x \rangle = \pm \frac{1}{2}$.

Otherwise, either $f(v_0) < \frac{1}{2}$, $f(-v_0) > \frac{1}{2}$, or  $f(v_0) > \frac{1}{2}$, $f(-v_0) < \frac{1}{2}$. Consider $f(v)$ as $v$ moves along the semicircle from $v_0$ to $-v_0$. By the intermediate value theorem, there exists $v^*$ such that $f(v^*) = \frac{1}{2}$; consequently, $f(-v^*) \leq \frac{1}{2}$, and we can set $u_1 = v^*$.

Next, we carry out the same reasoning for the hyperplane $p''$ orthogonal to  $u_1$. In the two-dimensional plane $p'' \cap p_0$, we find a vector $u_2$ satisfying the required condition. 
\end{proof}

Note that here we did not need the more complicated topological arguments from~\cite{makeev2000}.

\begin{figure}
    \centering
    \includegraphics[width=0.5\linewidth]{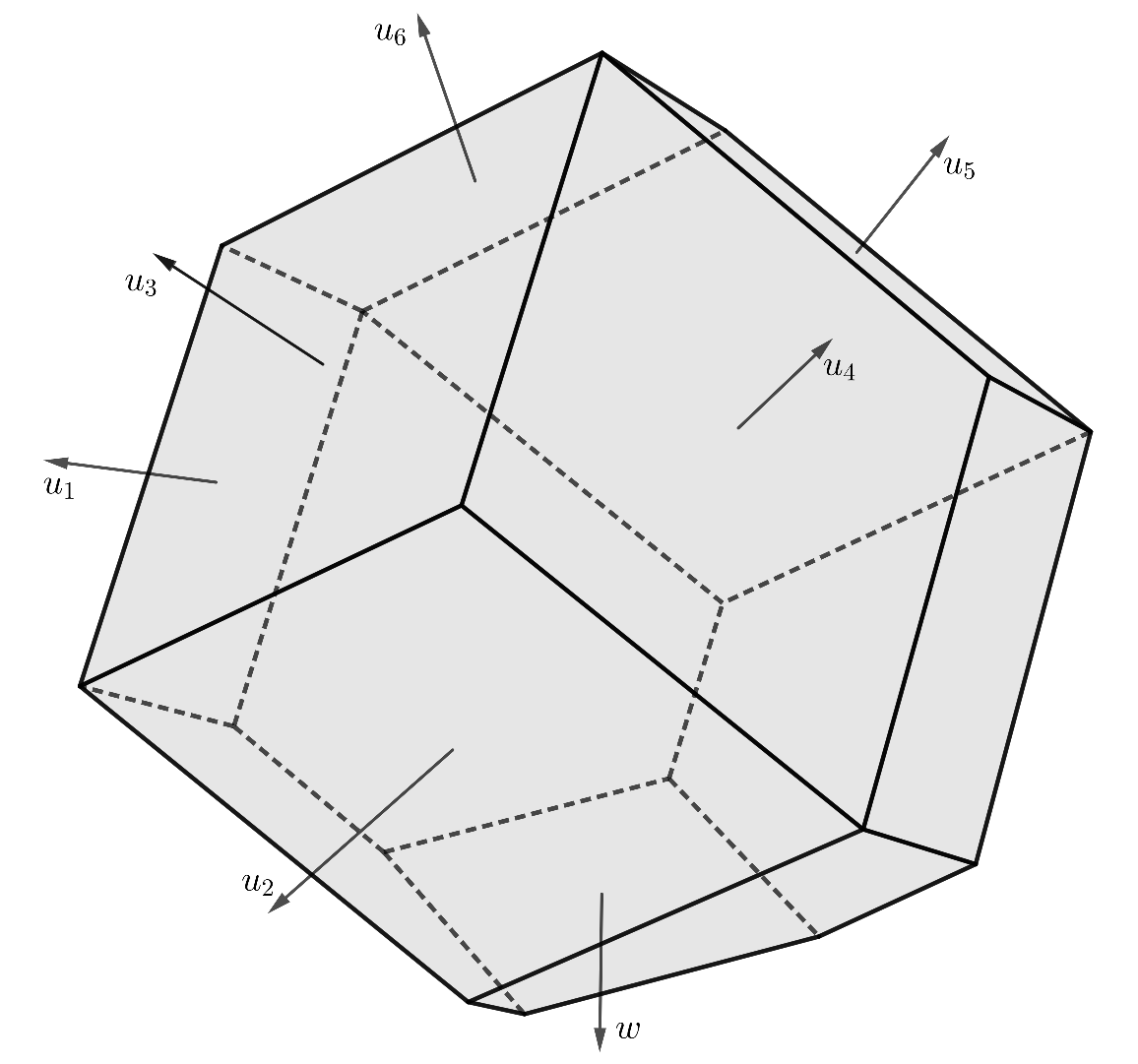} 
    \caption{Truncated rhombic dodecahedron.}
    \label{fig:rd_normals}
\end{figure}

\begin{statement}
\label{prop:ucs3d}
    Let $u_1, \dots, u_6 \in p_0$ be the unit normals to the six pairs of faces of a rhombic dodecahedron, and let the unit vector $w$ be orthogonal to $u_1$ and $u_2$ (see Fig.~\ref{fig:rd_normals}). 
    
    Then the four sets
    \[
        U_1 = H_1 \cap H_2 \cap H_3^+ \cap H_4^+ \cap H_5^+ \cap H_6^+,
    \]
    \[
        U_2 = H_1 \cap H_2 \cap H_3^+ \cap H_4^+ \cap H_5^+ \cap H_6^-,
    \]
    \[
        U_3 = H_1 \cap H_2 \cap H_3^+ \cap H_4^+ \cap H_5^- \cap H_6^-,
    \]
    \[
        U'_4 = H_1 \cap H_2 \cap H_3^+ \cap H_4^- \cap H_5^+ \cap H_6^- \cap H^+(w),
    \]

    form a universal covering system in the hyperplane $p_0$. Moreover, for any set $A'$ of unit diameter containing the origin, there exists a rotation $\phi$ of $p_0$ such that $\phi A'$ is covered by at least one of the sets $U_i$.
\end{statement}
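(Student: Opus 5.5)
We plan to reduce the statement to Proposition~\ref{st:brick} together with the Grünbaum cutting principle (Proposition~\ref{st:cutting}), applied inside the three-dimensional hyperplane $p_0$.

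\textbf{Step 1: placing $A'$ in a layer.} Given $A'\subset p_0$ of unit diameter containing the origin, Proposition~\ref{st:brick} gives orthogonal unit vectors $u_1,u_2\in p_0$ with $A'\subset H_1\cap H_2$. We choose a rotation $\phi$ of $p_0$ taking $u_1,u_2$ to the two fixed orthogonal normals of the rhombic dodecahedron from Fig.~\ref{fig:rd_normals}. These are the normals along two coordinate axes in the standard realization. After this rotation, $\phi A'\subset H_1\cap H_2$.

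\textbf{Step 2: the pairs $u_3,\dots,u_6$.} For each $i=3,\dots,6$, the width of $\phi A'$ in direction $u_i$ is at most $1$. So the slab $\{-\tfrac12\le\langle x,u_i\rangle\le\tfrac12\}$, possibly translated, contains $\phi A'$. Since the center (the origin) is fixed, we cannot translate. Instead we use only the half-space alternative: the support values satisfy $f(u_i)+f(-u_i)\le 1$, so $\phi A'\subset H_i^+$ or $\phi A'\subset H_i^-$ for every $i$. This is exactly the dichotomy behind Proposition~\ref{st:cutting}, here with the pair of hyperplanes $\langle x,u_i\rangle=\pm\tfrac12$.

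This yields a sign pattern $(\varepsilon_3,\dots,\varepsilon_6)\in\{+,-\}^4$, giving 16 possible sets. We reduce them to the listed ones by symmetry. The symmetry group of the rhombic dodecahedron fixing the pair $\{\pm u_1\},\{\pm u_2\}$ (rotations by $\pi$ about the three axes, reflections, and swapping $u_1\leftrightarrow u_2$) acts on the four remaining normals $u_3,\dots,u_6$. These lie in the two planes spanned by $u_1\pm u_2$ and the third axis. The group permutes them and flips their signs. Composing $\phi$ with a suitable element of this group preserves $H_1\cap H_2$ and maps the pattern to a canonical representative.

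\textbf{Step 3: orbit bookkeeping.} We expect the orbits of patterns to be represented by $(+,+,+,+)$, $(+,+,+,-)$ and $(+,+,-,-)$, which give $U_1,U_2,U_3$. The remaining orbit is $(+,-,+,-)$, a pattern in which the half-spaces do not share a common corner in the usual way. Enumerating the orbits explicitly is the main bookkeeping step, and the indexing in Fig.~\ref{fig:rd_normals} must be matched carefully.

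\textbf{Step 4: the case $U_4'$.} For the pattern $(+,-,+,-)$ we need the extra truncation $H^+(w)$, with $w$ orthogonal to $u_1,u_2$. Here we again use the fact that the width of $\phi A'$ in direction $w$ is at most $1$, so $\phi A'\subset H^+(w)$ or $\phi A'\subset H^-(w)$.

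To handle the second alternative, we apply the reflection $x\mapsto x-2\langle x,w\rangle w$. It preserves $H_1\cap H_2$ because $w\perp u_1,u_2$. It must map the pattern $(+,-,+,-)$ to itself, possibly up to permuting $u_3,\dots,u_6$, while sending $H^-(w)$ to $H^+(w)$. Checking that this reflection acts on $u_3,\dots,u_6$ compatibly with the pattern is the main obstacle. If the reflection fails, we would try the rotation by $\pi$ about the $u_1$-axis instead.

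Since every composition used is a rotation or reflection of $p_0$, it is an isometry. Reflections are allowed for a universal covering system because isometric copies are permitted. For the "rotation" claim, a reflection can be replaced by the reflection composed with $x\mapsto -x$, provided the target set is centrally symmetric in the relevant sense; otherwise we track orientation and accept an isometry.
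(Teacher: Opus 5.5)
Your proposal follows the paper's own argument: Proposition~\ref{st:brick} fixes $u_1,u_2$, and the half-space dichotomy of Proposition~\ref{st:cutting} is applied to $u_3,\dots,u_6$. The $16$ sign patterns are then reduced by the symmetries of the rhombic dodecahedron preserving $H_1\cap H_2$; they do form exactly four orbits, of sizes $4,2,2,8$. Finally, $U_4\cap H^{+}(w)$ is identified with $U_4\cap H^{-}(w)$ by a symmetry of $U_4$, which is your reflection in $w^\perp$ or the half-turn about $u_1$, depending on the labeling.

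The one point to repair is the closing orientation remark. The $U_i$ are not centrally symmetric, so composing with $x\mapsto -x$ does not help. Instead, note that each of $U_1,U_2,U_3,U_4'$ is invariant under the reflection in one of the planes $\operatorname{span}(w,u_1)$, $\operatorname{span}(w,u_2)$, $\operatorname{span}(w,u_1\pm u_2)$. Composing any orientation-reversing map you use with that reflection yields a rotation, which gives the ``Moreover'' claim as stated. For the UCS property itself an isometry suffices anyway.
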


\begin{proof}
    Fix some set $A$ of unit diameter. By Proposition~\ref{st:sym_center}, one can assume that its projection $A'$ contains the origin.  By Proposition~\ref{st:brick}, we choose a pair of orthogonal vectors $u_1, u_2$ such that $A$ is contained in $H_1 \cap H_2$. We complete the system of normals to the faces of the rhombic dodecahedron $u_1, \dots, u_6$. Then by Proposition~\ref{st:cutting}, $A$ is contained in at least one of the 16 sets of the form
     \[
        H_1 \cap H_2 \cap H_3^\pm \cap H_4^\pm \cap H_5^\pm \cap H_6^\pm 
    \]
    for some choice of signs. Taking into account the symmetries of the rhombic dodecahedron, we obtain four sets $U_i$, $i=1,2,3,4$.  $U_1, U_2, U_3$ are defined above; 
    \[
     U_4 = H_1 \cap H_2 \cap H_3^+ \cap H_4^- \cap H_5^+ \cap H_6^-.
    \]
    
    Moreover, the sets
 \[
        U'_4 = U_4 \cap H^+(w), \quad
        U''_4 = U_4 \cap H^-(w),
    \]
    are congruent, so it suffices to consider one of them.
\end{proof}

Observe that we could also have used the additional truncation of $U_3$ without increasing the number of elements of the UCS, but since a partition of $U_3$ was found, the truncation was not needed.

Now it remains to observe that for any set $A \subset \mathbb{R}^4$ covered by the Lassak cover $L_{(4)}$, we can choose one of the polyhedra covering its projection onto $p_0$. Consequently, in $\mathbb{R}^4$ there exists a UCS consisting of four variants of the truncation of the Lassak cover.

\begin{statement}\label{st:ucsR4}
The collection of four sets $\mathcal{S} = \{L_{(4)} \cap U_1, L_{(4)} \cap U_2, L_{(4)} \cap U_3, L_{(4)} \cap U'_4\}$ is a UCS in $\mathbb{R}^4$ 
\end{statement}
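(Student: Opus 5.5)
The plan is to combine Theorem~\ref{thm:lassak} with Proposition~\ref{prop:ucs3d}, using the rotational symmetry of $L_{(4)}$ about its axis. Let $F\subset\mathbb{R}^4$ have $\operatorname{diam}F=1$. Passing to the closed convex hull does not change the diameter, so we may assume $F$ is closed and convex. By Theorem~\ref{thm:lassak} and Proposition~\ref{st:sym_center}, there is an isometry $\psi$ with $\psi F\subset L_{(4)}$ such that the symmetry axis $\ell=\{x_1=x_2=x_3=0\}$ of $L_{(4)}$ meets $\psi F$. The coordinates are those fixed above: the axis is the $x_4$-direction and the sphere $S'\subset p_0$.

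Next I project to $p_0$. Let $\pi$ be the orthogonal projection onto $p_0$ and $A'=\pi(\psi F)$. Then $A'$ contains the origin, because the point of $\psi F$ on $\ell$ projects to $0$. Also $\operatorname{diam}A'\le 1$. Proposition~\ref{prop:ucs3d} is stated for unit diameter, but its proof only uses $\operatorname{diam}A'\le1$ through Propositions~\ref{st:brick} and~\ref{st:cutting}. Alternatively, one can enlarge $A'$ to a set of diameter exactly $1$ still containing $0$. Either way, there is a rotation $\phi$ of $p_0$ and an index $i$ with $\phi A'\subset U_i$ (reading $U'_4$ for $i=4$). Extend $\phi$ to an isometry $\Phi$ of $\mathbb{R}^4$ that acts as $\phi$ on $p_0$ and fixes the $x_4$-coordinate.

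The key observations concern $\Phi$ and the sets involved.
\begin{itemize}
\item $\Phi$ preserves $L_{(4)}$. Both centers $c_0,c_1$ lie on $\ell$, which $\Phi$ fixes pointwise, so both balls are mapped to themselves and $\Phi(L_{(4)})=L_{(4)}$.
\item $\Phi$ commutes with $\pi$, since it is a rotation in the first three coordinates and the identity in $x_4$.
\item Each $U_i$ is defined by conditions $\langle x,u\rangle\le\frac12$ or $\ge-\frac12$ with $u\in p_0$, and $w\in p_0$ as well. Hence $x\in U_i$ if and only if $\pi(x)\in U_i$, so $U_i=\pi^{-1}(U_i\cap p_0)$ is a cylinder over its trace in $p_0$.
\end{itemize}
Consequently $\pi(\Phi\psi F)=\phi A'\subset U_i$ gives $\Phi\psi F\subset U_i$. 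Together with $\Phi\psi F\subset\Phi L_{(4)}=L_{(4)}$, this yields $\Phi\psi F\subset L_{(4)}\cap U_i$. So every set of unit diameter is covered by an isometric copy of a member of $\mathcal{S}$, which is the definition of a UCS.

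The main obstacle is making sure that in Proposition~\ref{prop:ucs3d} the normals $u_j$ and $w$ lie in $p_0$, and that the rotation acts only within $p_0$; this is exactly why $A'$ must contain the origin. The point needing care is that the rotation of $p_0$ from Proposition~\ref{prop:ucs3d} must be about the origin, which lies on the axis, so that its extension preserves $L_{(4)}$. I would check this against the proof of Proposition~\ref{st:brick}, where $A'$ containing $0$ is used to position the slabs centered at the origin. I would also verify the diameter-$\le1$ relaxation noted above.
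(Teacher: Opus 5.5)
Your proof is correct and follows the same route as the paper. You place the set in $L_{(4)}$ with the symmetry axis meeting it, project to $p_0$, apply Proposition~\ref{prop:ucs3d}, and extend the resulting orthogonal map of $p_0$ to an isometry of $\mathbb{R}^4$ that fixes the axis $x_1=x_2=x_3=0$ pointwise and therefore preserves $L_{(4)}$. The details you add beyond the paper's short argument are all valid: that each $U_i$ is a cylinder $\pi^{-1}(U_i\cap p_0)$, that $\operatorname{diam}A'\le 1$ rather than $=1$ is harmless, and that the map is linear on $p_0$ (about the origin), so its extension fixes the axis.
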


\begin{proof}
    Let $A_0 \subset \mathbb{R}^4$ be an arbitrary set of unit diameter. Consider an isometric copy $A \subset L_{(4)}$ of $A_0$. Let $A'$ be the projection of $A$ onto $p_0$. By Proposition~\ref{prop:ucs3d}, there exists a rotation  $\psi$ about the axis $0x_1$   such that $\psi A' \subset p_0$ is covered by the set $U \in \{U_1, U_2, U_3, U'_4\}$. By the construction of the Lassak cover, $\psi L_{(4)} = L_{(4)}$. Therefore, $\psi A \subset L_{(4)} \cap U$, as required. 
\end{proof}

\section{Computational methods}

In this section we describe the constructive procedure used to partition the truncated Lassak cover according to a fixed combinatorial scheme together with a geometric realization determined by a system of directions and a choice of a center point. Throughout the section we use the notation introduced earlier: $L_{(4)}^H = L^H$ denotes the Lassak cover truncated by a family of hyperplanes $H = \{h_1, h_2, \dots, h_m\}$, and $c_0$ denotes the center of the ball $B_0$ in the representation $L_{(4)} = B_0 \cap B_1$.  
Here we assume that $L^H$ is an element of the universal covering system $\mathcal{S}$ introduced above.

\subsection{Directions induced by the \texorpdfstring{$4$}{4}-dimensional hypercube}

The combinatorial structure of the partition is derived from the $4$-dimensional hypercube. Consider the set of its vertices $V = \{-1,1\}^4 \subset \mathbb{R}^4$. For each vertex $v \in V$ we define the normalized direction $d(v) = \frac{v}{\|v\|}$. Let $D = \{ d_1,\dots,d_{16} \}$
be the set of all such directions. These directions determine rays starting from a common center and intersecting the boundary of $L^H$. 

Let $\mathcal{F} = \{F_1,\dots,F_8\}$ be the family of index sets corresponding to these facets, where for $j \in \{1, 2, 3, 4\}$:
$$F_{2j - 1} =\{i: v_i = (v_{i, 1}, v_{i, 2}, v_{i, 3}, v_{i, 4}) \in V: v_{i, j} = -1 \},$$
$$F_{2j} =\{i: v_i = (v_{i, 1}, v_{i, 2}, v_{i, 3}, v_{i, 4}) \in V: v_{i, j} = 1 \}.$$ Thus each $F_i$ selects eight directions among the sixteen rays.

\subsection{Fixing the combinatorial structure}

Let $w$ be a point inside $L^H$ that serves as the apex of the partition. The construction is initialized with $w = w_0$. During the optimization stage described below, the point $w$ is allowed to move within the interior of $L^H$. The following property ensures that the construction remains well-defined and corresponds to some partition of $L^H$.

\begin{lemma}
If $w \in \operatorname{int}(L^H)$, then every ray $w + \lambda d_i$, where $\lambda \ge 0$, with $d_i \in D$, intersects the boundary $\partial L^H$ at a unique point $x_i$, where $i \in \{1, 2, \dots, 16\}$. Additionally, $w \in \operatorname{ConvexHull}\left(\{x_1, \dots, x_{16}\}\right)$.
\end{lemma}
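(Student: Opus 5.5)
The plan is to use two facts about $L^H$: it is compact and convex, and the directions in $D$ positively span $\mathbb{R}^4$. Compactness and convexity come from the definitions. $L^H$ is the intersection of the two closed balls $B_0, B_1$ with finitely many closed half-spaces, so it is closed, convex and bounded (it lies in $B_0$). Given $w \in \operatorname{int}(L^H)$ and a unit vector $d_i$, I would consider the set $\Lambda_i = \{\lambda \ge 0 : w + \lambda d_i \in L^H\}$. This set is a compact interval $[0, \lambda_i^*]$: it is convex because $L^H$ is convex, closed because $L^H$ is closed, and bounded because $L^H$ is. Since $w$ is interior, $\lambda_i^* > 0$. I would set $x_i = w + \lambda_i^* d_i$. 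This point lies in $L^H$ by closedness and is a limit of points $w+\lambda d_i$ with $\lambda > \lambda_i^*$ outside $L^H$, so $x_i \in \partial L^H$.

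For uniqueness I would use the standard fact that for a convex body $K$ with $w \in \operatorname{int} K$ and $y \in K$, every point $w + t(y-w)$ with $0 \le t < 1$ lies in $\operatorname{int} K$. The proof is a homothety: the ball $B(w,\varepsilon) \subset K$ is mapped by $z \mapsto y + (1-t)^{-1}\cdots$, or more directly, $(1-t)B(w,\varepsilon) + t y \subset K$ by convexity, and this set is a ball around $w+t(y-w)$. Applying this with $y = x_i$ shows that every point $w+\lambda d_i$ with $0 \le \lambda < \lambda_i^*$ is interior. Points with $\lambda > \lambda_i^*$ lie outside $L^H$. Hence $x_i$ is the only boundary point on the ray.

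For the convex hull claim, I would use that $\sum_{v \in V} v = 0$, since the vectors come in antipodal pairs $\pm v$. Because all $\|v\| = 2$, this gives $\sum_i d_i = 0$. The identity alone is not enough, because the lengths $\lambda_i^*$ differ, so I would pair antipodal directions instead. If $d_j = -d_i$, then $w$ lies on the segment $[x_i, x_j]$:
\[
w = \frac{\lambda_j^*}{\lambda_i^*+\lambda_j^*}\, x_i + \frac{\lambda_i^*}{\lambda_i^*+\lambda_j^*}\, x_j ,
\]
with both coefficients positive. This already places $w$ in the convex hull of two of the $x_i$, hence in $\operatorname{ConvexHull}(\{x_1,\dots,x_{16}\})$. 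For a stronger statement (interior of the hull), averaging these eight pair representations with weight $1/8$ each expresses $w$ with all coefficients strictly positive. Since $\{d_i\}$ spans $\mathbb{R}^4$ and the $x_i - w$ are positive multiples of the $d_i$, $w$ is then in the interior of the hull.

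The main obstacle is only the uniqueness argument, that is, making the interior-segment property rigorous. The rest is bookkeeping. The antipodal pairing avoids any need to solve for barycentric coordinates explicitly.
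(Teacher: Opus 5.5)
Your proof is correct. For the existence and uniqueness of $x_i$ it follows the same idea as the paper (convexity of $L^H$ plus $w \in \operatorname{int} L^H$), but carries it out fully. The paper's two-line argument never mentions boundedness, which you rightly use (via $L^H \subset B_0$) to ensure $\lambda_i^*$ is finite. It also asserts uniqueness without the interior-segment lemma you prove; your direct version with $(1-t)B(w,\varepsilon)+ty$ is complete, so you can drop the unfinished homothety phrase.

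For the convex-hull claim your route genuinely differs from the paper's. The paper only says the claim follows because $L^H$ is a convex set containing $w$, and that is not a sufficient reason. Given $\lambda_i^*>0$, one has $w \in \operatorname{conv}\{x_1,\dots,x_{16}\}$ if and only if $0 \in \operatorname{conv}(D)$. Indeed, if $\sum_i \mu_i d_i = 0$ with $\mu_i \ge 0$ and $\sum_i \mu_i = 1$, then $w = \sum_i \nu_i x_i$ with $\nu_i \propto \mu_i/\lambda_i^*$; conversely, $\sum_i \nu_i (x_i - w) = 0$ gives such a relation among the $d_i$. So the claim depends on how the directions are arranged, not on $L^H$. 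Your antipodal pairing supplies exactly this missing ingredient. It also gives you $w \in \operatorname{int}\operatorname{conv}\{x_i\}$ at no extra cost, which the paper's sketch does not reach.

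One remark on scope. The pairing argument survives the random rotation $d_i' = Q d_i$ used in the search algorithm, since $Q$ is linear. It does not survive an independent optimization of $d_1',\dots,d_{16}'$, which the paper's objective $F(w,d_1',\dots,d_{16}')$ allows. There the robust hypothesis is the positive-spanning property from your first sentence, i.e.\ $0 \in \operatorname{int}\operatorname{conv}\{d_i'\}$, which persists under small perturbations.
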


\begin{proof}
The set $L^H$ is convex, being an intersection of convex sets (balls and halfspaces). Since $T$ lies in its interior, the ray intersects the boundary exactly once in the forward direction. Because the truncated Lassak cover $L^H$ is a convex set containing the point $w$, it follows that $w \in \operatorname{ConvexHull}(\{x_1, \dots, x_{16}\})$.
\end{proof}

This lemma leads to a formal way to define the set of boundary points generated by the directions. For each $d_i \in D$ we define the intersection point $x_i$ with the boundary of $L^H$:

\begin{definition}
Let $\lambda_i = \sup \{ \lambda \ge 0 : w + \lambda d_i \in L^H \}$.
We define $x_i = w + \lambda_i d_i$. Thus we obtain a set of points $X = \{x_1,\dots,x_{16}\} \subset \partial L^H$.
\end{definition}

 Now we construct eight subsets of $L^H$ corresponding to the facets of the hypercube. For each $i=1, 2, \dots, 8$ define the polyhedral cone $K_i = \operatorname{conv} \{ \alpha x_j : j \in F_i, \; \alpha \in \mathbb{R}_{>0} \}$.

Define $C_i = L^H \cap \left( \bigcap_{M \in \mathcal{M}_i} M \right)$. The set $C_i$ is an intersection of convex sets containing $w$, hence it is convex and contains $w$.

\begin{statement}
The sets $C_1,\dots,C_8$ form a partition of $L^H$.
\end{statement}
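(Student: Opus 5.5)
We read the construction as intended: the cones are taken with apex $w$, i.e. $K_i = w + \operatorname{cone}\{x_j - w : j \in F_i\}$, and $\mathcal{M}_i$ is the family of half-spaces cutting out $K_i$, so that $C_i = L^H \cap K_i$. "Partition" will mean that the $C_i$ cover $L^H$ and have pairwise disjoint interiors. Boundary points may be assigned to any adjacent part; this does not matter for the diameter bound, since diameter is taken over closures. The plan is to transport the problem to the boundary of the hypercube through a radial map centred at $w$.

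\textbf{Step 1 (radial parametrization).} Since $w \in \operatorname{int} L^H$ and $L^H$ is compact and convex, the gauge argument of the preceding Lemma shows that every ray from $w$ meets $\partial L^H$ exactly once. Hence every point $y \in L^H \setminus \{w\}$ can be written uniquely as $y = w + t\,\theta$ with $t>0$ and $\theta$ a unit direction. Membership of $y$ in $K_i$ depends only on $\theta$. It therefore suffices to show that the direction cones $\Sigma_i = \operatorname{cone}\{x_j - w : j \in F_i\}$ cover $\mathbb{R}^4$ and have disjoint interiors.

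\textbf{Step 2 (reference configuration).} Write $x_j - w = \lambda_j d_j$ with $\lambda_j>0$. Then $\Sigma_i = \operatorname{cone}\{d_j : j\in F_i\} = \operatorname{cone}\{v_j : v_j \in V,\ v_{j,k} = \pm 1\}$, where $k$ and the sign are determined by $i$. Positive rescaling of the generators does not change a cone, so the $\lambda_j$ drop out entirely, and $\Sigma_i$ is exactly the cone over a facet of the cube $[-1,1]^4$. Concretely, $\Sigma_{2k} = \{z : z_k \ge |z_l| \text{ for all } l\}$ and $\Sigma_{2k-1} = \{z : -z_k \ge |z_l| \text{ for all } l\}$. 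Every $z \neq 0$ has some coordinate $k$ attaining $\max_l |z_l|$, so $z$ lies in one of these cones. Two distinct cones intersect only where $|z_k| = |z_l|$ for distinct $k,l$, or where $z_k = 0$; both are sets with empty interior. Because each $\Sigma_i$ is cut out by finitely many half-spaces, namely $z_k \ge \pm z_l$, these half-spaces translated to apex $w$ form the family $\mathcal{M}_i$.

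\textbf{Step 3 (conclusion).} Intersecting with $L^H$ gives $\bigcup_i C_i = L^H$, because $w$ itself lies in every $C_i$. Each intersection $C_i \cap C_{i'}$ lies in $w + (\Sigma_i \cap \Sigma_{i'})$, which is contained in a finite union of hyperplanes through $w$ and so has empty interior. Each $C_i$ is convex, as already noted in the text, and has nonempty interior, since $w$ is interior to $L^H$ and $\Sigma_i$ has nonempty interior.

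\textbf{Main obstacle.} The only real subtlety is the reading of the definition. If $K_i$ were taken with apex at the origin and generators $x_j$ rather than $x_j - w$, the cones would depend on the $\lambda_j$ and the covering claim could fail. The key observation that makes everything work is that the $\lambda_j$ only rescale generators. So I would first fix the apex-$w$ convention explicitly, and then Steps 2 and 3 are routine.
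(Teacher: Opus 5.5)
Your proof is correct and follows the paper's route: the paper's entire argument is the assertion $\bigcup_i K_i = \mathbb{R}^4$, followed by intersecting with $L^H$. Your Step 2 supplies the justification the paper leaves out: with apex $w$ (the reading consistent with the paper's remark that every $C_i$ contains $w$), the $\lambda_j$ only rescale generators, so the $K_i$ are exactly the facet cones of the cube. Your check that the parts have disjoint interiors goes beyond what the paper verifies.
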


\begin{proof}
Observe that $\bigcup_{1 \leq i \leq 8} K_i = \mathbb{R}^4$. Thus,

\[
    \bigcup_{1 \leq i \leq 8} C_i = \bigcup_{1 \leq i \leq 8} (K_i \cap L^H) = \left(\bigcup_{1 \leq i \leq 8} K_i\right) \cap L^H = L^H.
\]

\end{proof}

\subsection{Applying a random rotation}

The combinatorial structure described above is fixed. However, the geometry of the partition depends on the  rotation of the direction set $D$.

For the initialization of the directions we will use a random orthogonal matrix. The following statement means that we choose a distribution that is ``uniform'' (left--invariant) on the group of orthogonal transformations of the space.

\begin{lemma}[see \cite{mitchell2008sampling}]
Let $P \in \mathbb{R}^{n \times n}$ be a random matrix with elements sampled from the standard Gaussian distribution. Let $P = QR$, where $Q, R \in \mathbb{R}^{n \times n}$ are the QR-decomposition of this matrix, and $Q$ is a (random) orthogonal matrix. Then the distribution of the generated matrix is the Haar distribution. 
\end{lemma}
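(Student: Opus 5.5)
We prove the statement in the form usually taken as the definition of the Haar measure: the law of $Q$ is invariant under left multiplication by every fixed $U \in O(n)$, i.e. $UQ \overset{d}{=} Q$. Uniqueness of Haar probability measure on a compact group then identifies the law of $Q$.

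One caveat comes first. The QR-decomposition is unique only after normalization. For a Gaussian matrix $P$ we have $\det P \neq 0$ almost surely. For invertible $P$ there is a unique factorization $P = QR$ with $Q$ orthogonal and $R$ upper triangular with \emph{positive} diagonal entries. This is exactly Gram--Schmidt applied to the columns of $P$. We adopt this normalization (a sign correction $Q \mapsto Q\,\mathrm{diag}(\operatorname{sign} r_{ii})$ if a numerical routine returns another convention). Without it the statement can fail for particular implementations. The map $P \mapsto Q(P)$ is then a well-defined measurable function on $GL_n(\mathbb{R})$, which has full Gaussian measure.

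The two key steps are as follows.

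\textbf{Rotation invariance of $P$.} The entries of $P$ are i.i.d.\ $N(0,1)$, so its density is proportional to $\exp(-\tfrac12\operatorname{tr}(P^{T}P))$. For any fixed orthogonal $U$ we have $(UP)^{T}(UP)=P^{T}P$, and $P\mapsto UP$ has Jacobian $|\det U|^{n}=1$. Hence $UP \overset{d}{=} P$.

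\textbf{Equivariance of the factorization.} If $P=QR$ is the normalized factorization, then $UP=(UQ)R$, where $UQ$ is orthogonal and $R$ is still upper triangular with positive diagonal. Uniqueness gives $Q(UP)=U\,Q(P)$.

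Combining the two steps,
\[
UQ(P)=Q(UP)\overset{d}{=}Q(P),
\]
so the law $\mu$ of $Q$ is a left-invariant Borel probability measure on $O(n)$. By uniqueness of Haar probability measure on the compact group $O(n)$, $\mu$ is the Haar distribution. (Right invariance follows automatically, since $O(n)$ is compact and hence unimodular.)

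The main obstacle is the uniqueness and normalization step. Only the sign-fixed factorization is equivariant. Justifying uniqueness requires one short argument: if $Q_1R_1=Q_2R_2$, then $Q_2^{T}Q_1=R_2R_1^{-1}$ is both orthogonal and upper triangular with positive diagonal, hence equal to the identity. One must also note that non-invertible $P$ form a null set, so $Q$ is defined almost surely. Everything else is routine, and this is precisely the content cited from \cite{mitchell2008sampling}.
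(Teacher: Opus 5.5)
Your proof is correct and complete. The paper gives no argument of its own beyond the citation, and your route (orthogonal invariance of the Gaussian matrix, the equivariance $Q(UP)=U\,Q(P)$ of the positive-diagonal factorization, and uniqueness of the Haar probability measure on $O(n)$) is the standard proof the reference stands for.

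Your normalization caveat is a genuine sharpening of the statement as written, since an unnormalized library QR need not return a Haar-distributed $Q$. For the paper's use it happens to be immaterial. Any two orthogonal QR factors of an invertible $P$ differ by a diagonal sign matrix $S$, and $S$ permutes both the direction set $D$ and the facet family $\mathcal{F}$, so $Q$ and $QS$ produce the same partition.
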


\subsection{Lower and upper estimates}

 The main objective function can therefore be written as

\begin{equation}
F(w,d'_1, \dots, d'_{16}) = \max_{i=1,\dots,8} \operatorname{diam} C_i(w,d'_1, \dots, d'_{16}) \xrightarrow{} \min. \tag{$\mathcal{P}$}
\end{equation}

Exact diameter computation inside $L^H$ is technically complicated, because we consider the partitions in $\mathbb{R}^4$. It is known that the distance between two unit circles cannot be expressed in radicals~\cite{neff1990finding}. The following scheme has been applied to obtain a lower bound for the objective. Let us define a finite point approximation for  $C_i$, $1 \le i \le 8$:

\begin{enumerate}
    \item Let $m$ be a positive integer parameter used to construct a lower-bound approximation based on a uniform discretization of the open interval $(0, 1)$ with grid points given by $\theta_r = \frac{r}{m+1}$ for $r = 1, \ldots,  m$.

    \item Next, for every two directions $d'_p, d'_q$, with $p, q \in F_i$, forming $C_i$,  we consider the projection onto the cover boundary induced by the aforementioned grid. Specifically, we define
    $$y_{pqr} = \sup \{ \lambda \ge 0 : w + \lambda (\theta_r d'_p + (1-\theta_r)d'_q) \in L^H \}.$$

    \item Finally, we construct the set $$\tilde C_i = \{w\} \cup \{d'_p : p \in F_i\} \cup \{y_{pqr} : p,q \in F_i; \, 1 \leq r \leq m\}.$$ Notice that $\tilde C_i \subset C_i$, so $\operatorname{diam} \tilde C_i \le \operatorname{diam} C_i$. Figure~\ref{fig:lower_bound_construction} clarifies this construction in the planar case.
\end{enumerate}

\begin{figure}[htbp]
\centering

\begin{subfigure}[t]{0.48\textwidth}
    \centering
    \includegraphics[width=\textwidth]{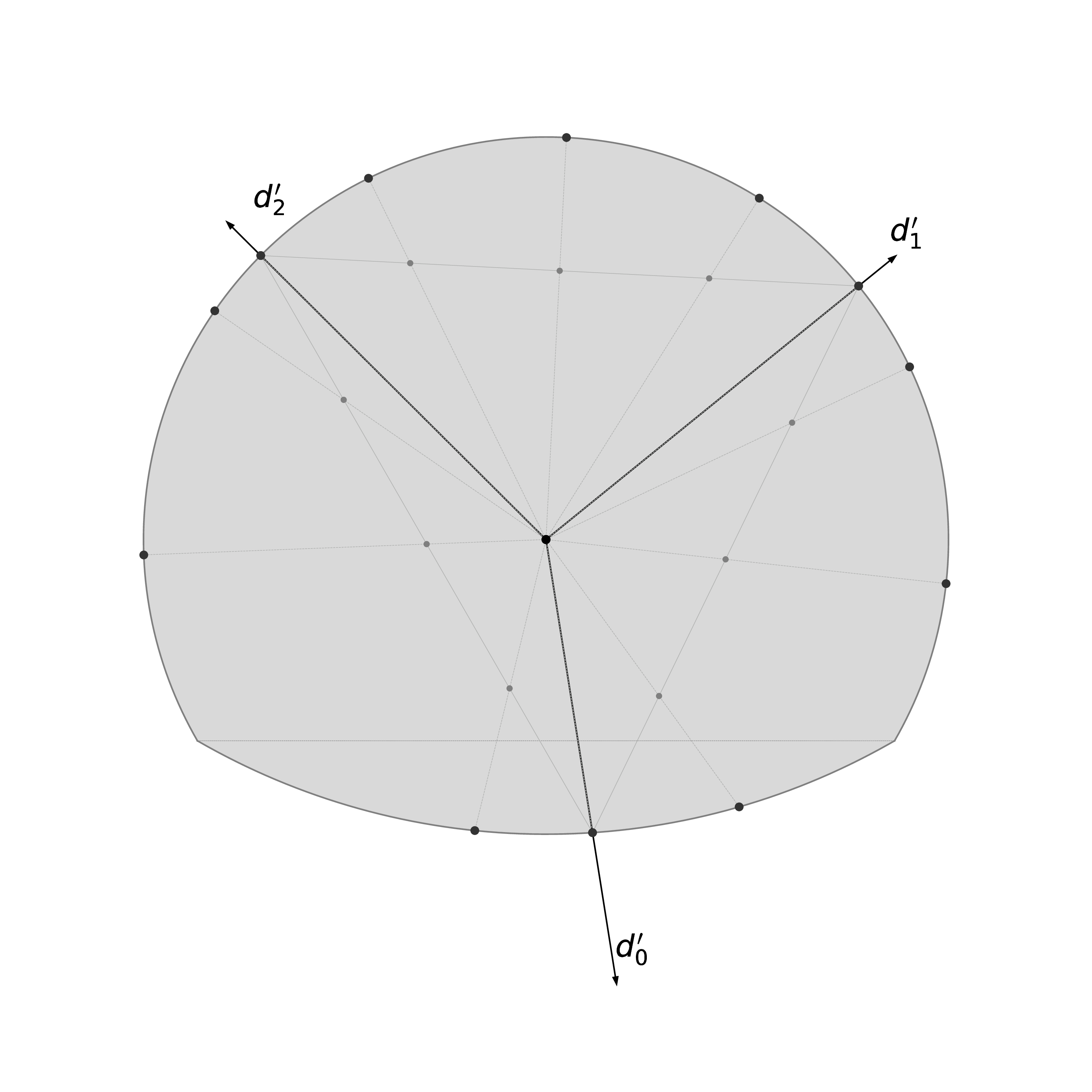} 
    \caption{Lassak cover $L^H_{(2)}$, \\ $H = \emptyset, m = 3$ }
    \label{fig:2d_lassak_cover_lower_bound}
\end{subfigure}
\hfill
\begin{subfigure}[t]{0.48\textwidth}
    \centering
    \includegraphics[width=\textwidth]{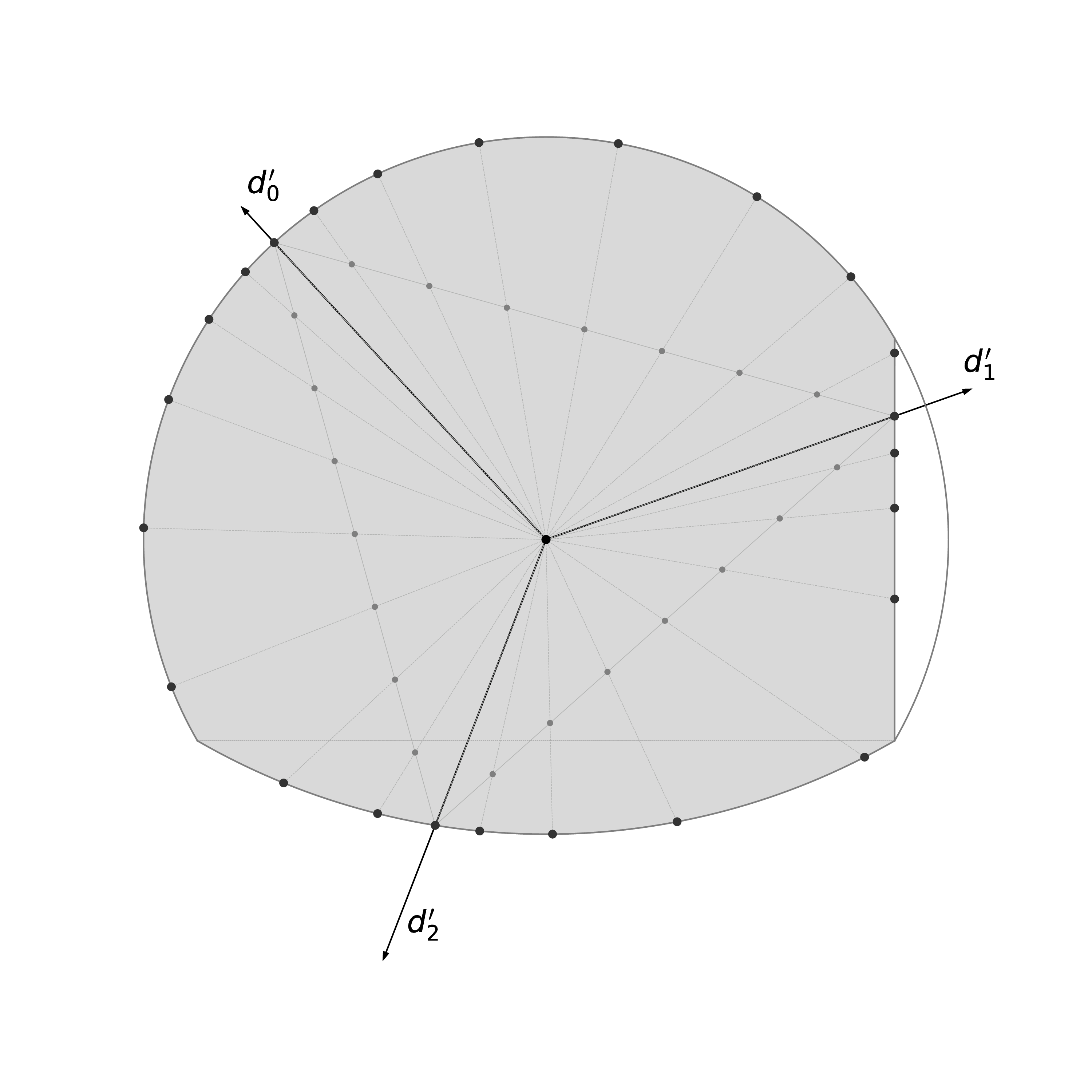} 
    
    \caption{Truncated Lassak cover $L^H_{(2)}$, \\ $|H| = 1, m = 7$}
    \label{fig:2d_truncated_lassak_cover_lower_bound}
\end{subfigure}

\caption{Lower bound, the Lassak cover (~\ref{fig:2d_lassak_cover_lower_bound}) and its truncated version (~\ref{fig:2d_truncated_lassak_cover_lower_bound}), division into 3 parts in the planar case.}
\label{fig:lower_bound_construction}

\end{figure}

 The convex linear combinations, the intersection points and the matrix of pairwise distances can be efficiently computed using  matrix and tensor computations. This allows us to use the resources of multi-core CPUs and GPUs without explicit parallel programming.
 \begin{equation}
     \tilde F(w,d'_1, \dots, d'_{16}) = \max_{i=1,\dots,8} \operatorname{diam} \tilde C_i(w, d'_1, \dots, d'_{16}) \xrightarrow{} \min. \tag{$\mathcal{P}_{lower}$}
 \end{equation}
 
 The problem $\mathcal{P}_{lower}$ was used repeatedly to find promising partitions using stochastic gradient descent, which were then optimized using more computationally intensive methods.

In order to justify the correctness of the partition in the Borsuk problem, we need either an approximation error estimate or an upper bound. In the present paper, we will restrict ourselves to the ``ancient Greek'' upper bound, in which the partition is applied to some circumscribed polyhedron for the Lassak cover. More precisely, we must define a series of circumscribed polyhedra that can approximate the Lassak cover arbitrarily closely.
  \begin{equation}
     \overline{F}(w,d'_1, \dots, d'_{16}) = \max_{i=1,\dots,8} \operatorname{diam} \overline{C}_i(w, d'_1, \dots, d'_{16}) \xrightarrow{} \min. \tag{$\mathcal{P}_{upper}$}
 \end{equation}

 Here $\overline{C}_i$ is the part of the circumscribed polyhedron (see Appendix~\ref{app:polyhedron_approximation} for more details). The problem $\mathcal{P}_{upper}$ was used with gradient-free optimization methods in order to ``fine-tune'' the partition. 

\subsection{Multiple runs}

The following algorithm clarifies the process described in this section. Recall that $\mathcal{S}$ is a universal covering system containing several truncated Lassak covers and the enclosing polyhedron $P_i$ for each $C_i \in \mathcal{S}$. The construction of this polyhedron for diameter evaluation is described in the Appendix. After that we run Algorithm~\ref{alg:lassak_diameter} for the elements of $\mathcal{S}$ and ensure that $d_{\text{best}} < 1$ for each $L^H$.

\begin{algorithm}
\caption{Search for a partition of the cover}
\label{alg:lassak_diameter}
\KwIn{Truncated Lassak cover $L^H$, set of directions $D$, facet structure $\mathcal{F}$, number of restarts $n_{\text{restarts}}$}

\KwOut{Best diameter $d_{\text{best}}$}

Initialize best diameter: $d_{\text{best}} = 1$;

\For{$i \leftarrow 1$ \KwTo $n_{\text{restarts}}$}{
Generate a random matrix $P \in \mathbb{R}^{4 \times 4}$ with $P_{ij} \sim \mathcal{N}(0, 1)$;

Apply QR decomposition: $P = QR$, where $Q \in O(4)$ is a random orthogonal matrix;

Replace each direction $d_i \in D$ by $d'_i = Q d_i$;
$w = c_0$;

Construct the partition of $L^H$ into sets $C_1, \dots, C_8$;

Find a local minimum of the optimization problem $\mathcal{P}_{lower}$ using stochastic gradient descent. Let $(\tilde w, \tilde D)$ be the local minimum of $\tilde F$;

If $\tilde F(\tilde w, \tilde D) < 1$, then apply "fine-tuning", i.e., gradient-free optimization to problem $\mathcal{P}_{upper}$, using $(\tilde w, \tilde D)$ as the starting point. Let $d_{\text{upper}}$ be the final value of $\overline{F}$;

\If{$d_{\text{upper}} < d_{\text{best}}$}{
$d_{\text{best}} = d_{\text{upper}}$;
}
}
\Return $d_{\text{best}}$;
\end{algorithm}

\section{Results}

Table~\ref{partition_results_table} summarizes the experimental results for partitioning elements of the above-mentioned universal covering system into 8 parts. 

To search for a partition, 1–2 thousand multistart runs were performed for each UCS element. Further computations were carried out using the \textit{scipy} and \textit{qhull} libraries, as well as an implementation of the Adam algorithm~\cite{Adam2017} in the PyTorch~\cite{Pytorch2019} library. For local optimization, the Nelder–Mead algorithm~\cite{nelder1965simplex} and the black-box optimization algorithm proposed by P.~Knysh and Y.~Korkolis~\cite{knysh2016blackbox} were also applied. Switching between algorithms was sometimes done manually, and the entire partition search process was not fully formalized. The total computation time was approximately 10 CPU days using a workstation with an EPYC 7532 CPU (32 cores).

Detailed specifications of the UCS elements obtained from this experiment, together with the corresponding validation scripts, can be found in the repository~\cite{our_project_repo}.


\begin{center}
\begin{table}
\caption{Paritions into 8 parts.}
\label{partition_results_table}
\centering
\setlength{\tabcolsep}{4pt} 
\renewcommand{\arraystretch}{1.1} 
\begin{tabular}{|c|c|c|c|}
\hline
$i$ & \shortstack{UCS element, Proposition~\ref{st:ucsR4}} & diameters & $d_{\text{max}}$ \\
\hline \hline
1 & $L_{(4)} \cap H_1 \cap H_2 \cap H_3^+ \cap H_4^+ \cap H_5^+ \cap H_6^+$ 
& {\scriptsize \shortstack{ \{0.99775 0.99763 0.99906 0.99683 \\ 0.99670 0.99762 0.99374 0.99339\} }} 
& 0.99906 \\
\hline
2 & $L_{(4)} \cap H_1 \cap H_2 \cap H_3^+ \cap H_4^+ \cap H_5^+ \cap H_6^-$ 
& {\scriptsize \shortstack{ \{0.99735, 0.99809, 0.99549, 0.99733, \\ 0.99736, 0.99503, 0.99615, 0.99566\} }} 
& 0.99809 \\
\hline
3 & $L_{(4)} \cap H_1 \cap H_2 \cap H_3^+ \cap H_4^+ \cap H_5^- \cap H_6^-$ 
& {\scriptsize \shortstack{ \{0.99862, 0.99819, 0.99729, 0.99987,\\ 0.99961, 0.99645, 0.99734, 0.99696\} }} 
& \textbf{0.99987} \\
\hline
4 & $L_{(4)} \cap H_1 \cap H_2 \cap H_3^+ \cap H_4^- \cap H_5^+ \cap H_6^- \cap W^+$ 
& {\scriptsize \shortstack{ \{0.99978, 0.99845, 0.99821, 0.99846,\\ 0.99800, 0.99634, 0.98538, 0.99037\} }} 
& 0.99978 \\
\hline
\end{tabular}
\end{table}
\end{center}

\section{Conclusion}

The presented estimates required a substantial amount of computation, as well as preliminary work not included in this paper. We used a relatively simple method for choosing the initial approximation. Based on our experience, it can be assumed that if further improvement of the Borsuk number estimate in dimension 4 is feasible using these means, it would require either a significantly different approach or much greater computational resources. It seems unlikely that the Borsuk conjecture in dimension 4 can be proved using the methods we employed.

\section*{Disclosure statement}

The authors report there are no competing interests to declare.

\section*{Data availability statement}

Descriptions of the partitions and the corresponding verification and visualization code are available in our repository~\cite{our_project_repo}.

\bibliographystyle{abbrv}
\bibliography{bibliography}

\appendix

\section{Construction of a circumscribed polyhedron in \texorpdfstring{$\mathbb{R}^n$}{Rn}}\label{app:polyhedron_approximation}

We describe a procedure for constructing a polyhedral outer approximation (circumscribed polyhedron) of a set arising from a universal covering set (UCS) in $\mathbb{R}^n$.

\begin{enumerate}

\item Let $L^H = L_{(n)} \cap \left( \bigcap_{i=1}^{t} \{x \in \mathbb{R}^n : \langle x, a_i \rangle + b_i \le 0\} \right)$ be a subset of the UCS obtained as the intersection of the Lassak cover $L_{(n)}$ (see Theorem~\ref{thm:lassak}) with a finite family of $t$ halfspaces defined by
\[
H = \{(a_i, b_i) : a_i \in \mathbb{R}^n,\ b_i \in \mathbb{R},\ i=1,\dots,t\}.
\]

\item Consider a uniform discretization of the boundary of the unit hypercube:
\[
G = \Bigl\{0, \tfrac{1}{m}, \dots, \tfrac{m-1}{m}, 1\Bigr\}^n \setminus \Bigl\{\tfrac{1}{m}, \dots, \tfrac{m-1}{m}\Bigr\}^n,
\]
i.e., $G = \left\{ (i_1,\dots,i_n) : i_j \in \{0,\tfrac{1}{m},\dots, \tfrac{m-1}{m}, 1\},\ \exists j \text{ such that } i_j \in \{0,1\} \right\}.$ This set represents the discretization of the hypercube boundary. The integer parameter $m$ used as ``$\text{grid\_size}$'' in our code~\cite{our_project_repo}. Table~\ref{partition_results_table} presents the results for a grid size of $m = 17$.

\item Let the ball $B_0$, forming $L_{(n)}$, is centered at $c_0$ with radius $r$. Define
\[
G_1 = \{\, c_0 + r \cdot \left(p - (1/2,\dots,1/2)\right) : p \in G \,\},
\]
which corresponds to the discretized boundary of a hypercube centered at $c_0$.

For each $X \in G_1$, consider the ray $c_0X$ and define
\[
G_2 = \{\, c_0X \cap \partial L_{(n)} : X \in G_1 \,\},
\]
i.e., the set of intersection points of these rays with the boundary of $L_{(n)}$.

\item For each point $p \in G_2$, let $h_p$ be the supporting (tangent) hyperplane to $L_{(n)}$ at $p$ (in practice, to one of the balls defining $L_{(n)}$). Define the polyhedron $P = \bigcap_{p \in G_2} H_p$,
where $H_p$ is the halfspace bounded by $h_p$ and containing $c_0$.

\item The set $P$ is a convex polyhedral outer approximation of $L_{(n)}$. Define $P_H = P \cap \left( \bigcap_{i=1}^{t} \{x : \langle x, a_i \rangle + b_i \le 0\} \right),$ which serves as a polyhedral approximation of $L^H$.

\end{enumerate}

Figure~\ref{fig:planar_case_construction} illustrates this construction in the planar case. For $n = 2$ the intersection of the two spheres forming the boundary of the Lassak cover is a circle of radius $\frac{1}{2}$. This figure shows a discretized set of directions, the corresponding boundary points obtained by radial projection from the center, and the supporting lines defining a polygonal outer approximation of these covers in planar case. Note that, due to the simmetricity of the following truncation (see Figure~\ref{fig:2d_truncated_lassak_cover}), both filled sets are universal covering sets in $\mathbb{R}^2$.

\begin{figure}[htbp]
\centering

\begin{subfigure}[t]{0.46\textwidth}
    \centering
     \includegraphics[width=\textwidth]{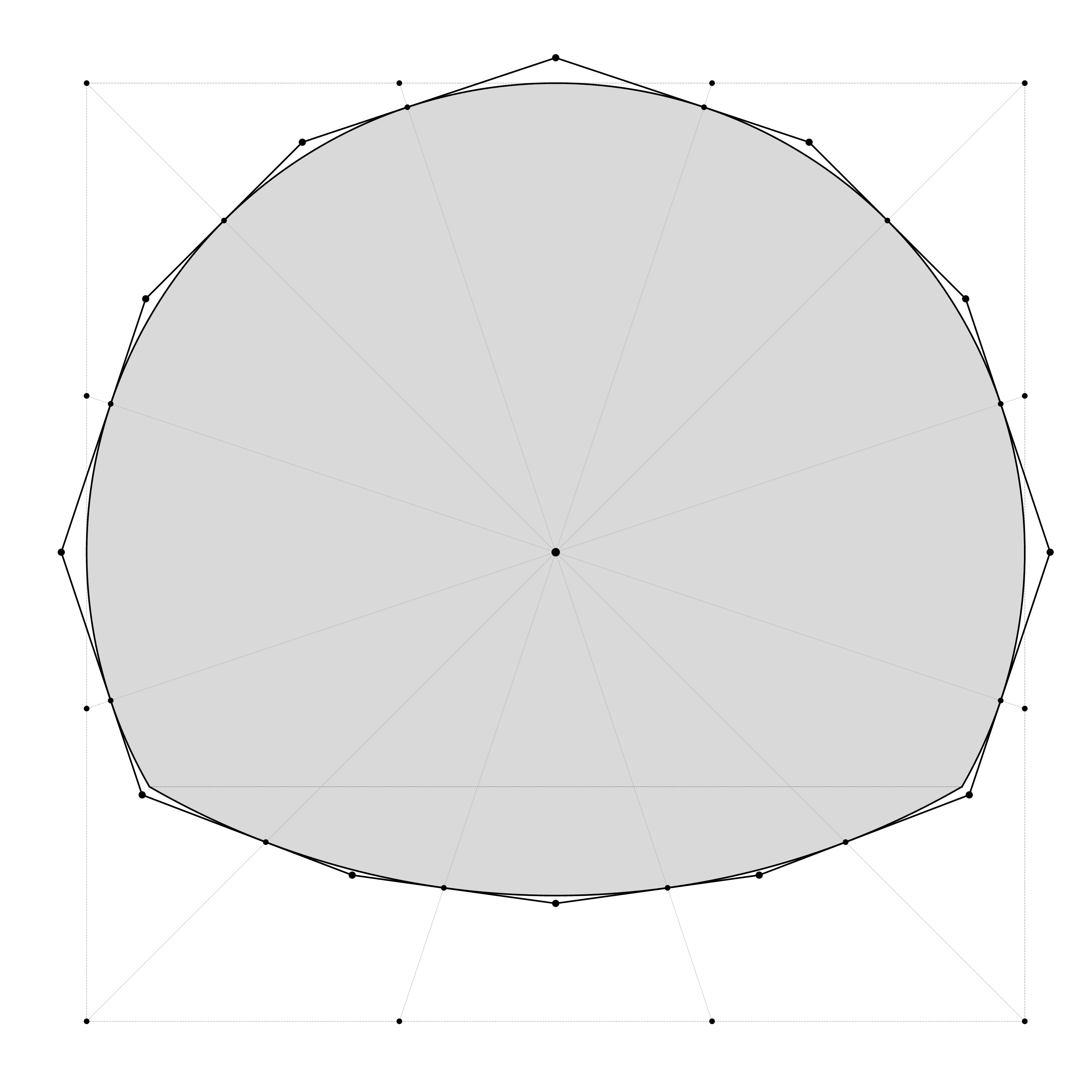} 
     
    \caption{Lassak cover $L^H_{(2)}$, $H = \emptyset$ }
    \label{fig:2d_lassak_cover}
\end{subfigure}
\hfill
\begin{subfigure}[t]{0.46\textwidth}
    \centering
    \includegraphics[width=\textwidth]{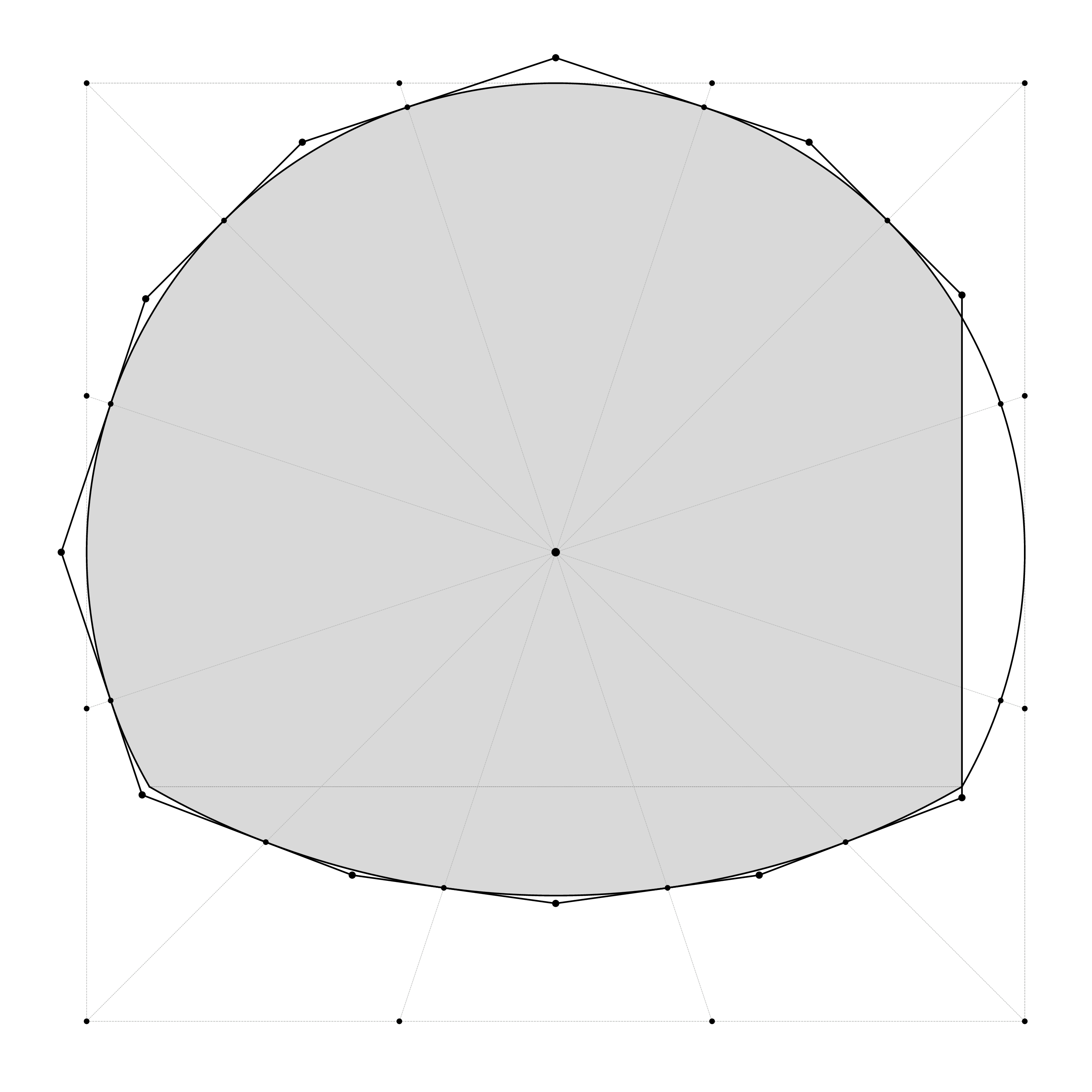} 
    
    \caption{Truncated Lassak cover $L^H_{(2)}$, $|H| = 1$}
    \label{fig:2d_truncated_lassak_cover}
\end{subfigure}

\caption{Circumscribed polyhedron approximation of Lassak cover (~\ref{fig:2d_lassak_cover}) and its truncated version (~\ref{fig:2d_truncated_lassak_cover}) in planar case.}
\label{fig:planar_case_construction}

\end{figure}

\end{document}